\documentclass[12pt, reqno]{amsart}
\usepackage{graphicx}
\usepackage{hyperref}
\usepackage[caption = false]{subfig}
\usepackage{amsthm}
\usepackage{amssymb}
\usepackage{mathrsfs}
\usepackage{mathtools}
\usepackage{enumerate}
\usepackage{amssymb}
\usepackage{wrapfig}
\usepackage{float}
\allowdisplaybreaks

\usepackage[twoside,paperwidth=190mm, paperheight=297mm, top=35mm, bottom=35mm, left=30mm, right=26mm, marginparsep=3mm, marginparwidth=40mm]{geometry}

\numberwithin{equation}{section}

\theoremstyle{plain}

\newtheorem{theorem}{Theorem}[section]
\newtheorem{corollary}[theorem]{Corollary}
\newtheorem{example}[theorem]{Example}
\newtheorem{lemma}{Lemma}[section]

\theoremstyle{definition}
\newtheorem{definition}[theorem]{Definition}
\theoremstyle{remark}
\newtheorem{remark}{Remark}[section]
\newtheorem{problem}[theorem]{Problem}
\newtheorem{question}[theorem]{Question}

\makeatother

\setlength{\parskip}{3pt}

\begin{document}
	
	\title{A generalized Bohr-Rogosinski phenomenon}
	\thanks{K. Gangania thanks to University Grant Commission, New-Delhi, India for providing Research Fellowship under UGC-Ref. No.:1051/(CSIR-UGC NET JUNE 2017).}

	\author[Kamaljeet]{Kamaljeet Gangania}
	\address{Department of Applied Mathematics, Delhi Technological University,
		Delhi--110042, India}
	\email{gangania.m1991@gmail.com}
	
	\author[S. Sivaprasad Kumar]{S. Sivaprasad Kumar}
	\address{Department of Applied Mathematics, Delhi Technological University,
		Delhi--110042, India}
	\email{spkumar@dce.ac.in}

	\maketitle	
	
	\begin{abstract} 
		In this paper, we generalize the Bohr-Rogosinski sum for the Ma-Minda classes of starlike and convex functions. Also the phenomenon is studied for the classes of starlike functions with respect to symmetric points and conjugate points along with their convex cases. Further, the connection between the derived results and the known ones are established with the suitable examples.
	\end{abstract}
	\vspace{0.5cm}
	\noindent \textit{2010 AMS Subject Classification}. Primary 30C45, 30C50, Secondary 30C80.\\
	\noindent \textit{Keywords and Phrases}. Subordination, Radius problem, Bohr Radius, Bohr-Rogosinski radius.
	
	\maketitle
	
	\section{Introduction}
	\label{intro}
	Let $\mathcal{A}$ denote the class of analytic functions of the form $f(z)=z+\sum_{k=2}^{\infty}a_kz^k$ in the open unit disk $\mathbb{D}:=\{z: |z|<1\}$. We denote by $\mathcal{S}$, the subclass of $\mathcal{A}$ of univalent functions. Using subordination~\cite{subbook}, Ma and Minda \cite{minda94} (also see \cite{ganga-iranian}) introduced the unified class of univalent starlike and convex functions defined as follows:
	\begin{equation*}
	\mathcal{S}^*(\psi):= \biggl\{f\in \mathcal{A} : \frac{zf'(z)}{f(z)} \prec \psi(z) \biggl\}
	\end{equation*}
	and
	\begin{equation*}\label{mindaclass}
	\mathcal{C}(\psi):= \biggl\{f\in \mathcal{A} : 1+\frac{zf''(z)}{f'(z)} \prec \psi(z) \biggl\},
	\end{equation*}
	where  $\psi$ is univalent, analytic and sarlike with respect to $1$ with $\Re{\psi(z)}>0$, $\psi'(0)>0$, $\psi(0)=1$ and $\psi(\mathbb{D})$ is symmetric about real axis. Note that $\psi \in \mathcal{P}$, the class of Carath\'{e}odory functions. Also when $\psi(z)=(1+z)/(1-z)$, $\mathcal{S}^*(\psi)$  and $\mathcal{C}(\psi)$ reduce to the standard classes $\mathcal{S}^*$ and $\mathcal{C}$ of univalent starlike and convex functions respectively. 
	
	In GFT, radius problems have a long and glorious history that continues to this day, see the recent articles \cite{ganga_cmft2021,janow,Kumar-cardioid,ganga-iranian,kumarG-2020}. In 1914, Harald Bohr \cite{bohr1914} proved a significant radius problem for the absolute power series:
	\begin{theorem}[Bohr's Theorem, \cite{bohr1914}]\label{BohrTheorem}
		Let $g(z)=\sum_{k=0}^{\infty}a_kz^k$ be an analytic function in $\mathbb{D}$ and $|g(z)|<1$ for all $z\in \mathbb{D}$, then
		\begin{equation*}
		\sum_{k=0}^{\infty}|a_k||z|^k\leq1, \quad \text{for} \quad |z|\leq\frac{1}{3}.
		\end{equation*}
	\end{theorem}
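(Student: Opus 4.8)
The plan is to reduce the statement to the classical Carath\'eodory coefficient bound together with a one-line summation of a geometric series. Since $|g(z)|<1$ on $\mathbb{D}$, in particular $a:=|a_0|=|g(0)|<1$, so $1-a>0$. First I would fix a unimodular constant $\lambda$ with $\lambda a_0=a$ (any $\lambda$ will do when $a_0=0$) and set
\[
p(z)=\frac{1-\lambda g(z)}{1-a}.
\]
Then $p(0)=1$, $p$ is analytic in $\mathbb{D}$, and
\[
\RE p(z)=\frac{1-\RE\bigl(\lambda g(z)\bigr)}{1-a}\ge\frac{1-|g(z)|}{1-a}>0,
\]
so $p\in\mathcal{P}$. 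Writing $p(z)=1+\sum_{k\ge1}c_kz^k$ and using $\lambda g(z)=a+\lambda\sum_{k\ge1}a_kz^k$, one reads off $c_k=-\lambda a_k/(1-a)$, and the classical bound $|c_k|\le2$ for functions in $\mathcal{P}$ (immediate from the Herglotz representation) gives
\[
|a_k|\le 2(1-a)=2\bigl(1-|a_0|\bigr),\qquad k\ge1.
\]

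Granting this, set $r=|z|$ and sum term by term:
\[
\sum_{k=0}^{\infty}|a_k|\,r^{k}\le a+2(1-a)\sum_{k=1}^{\infty}r^{k}=a+2(1-a)\,\frac{r}{1-r}.
\]
It remains to check that the right-hand side is at most $1$ precisely on $|z|\le1/3$. The inequality $a+2(1-a)r/(1-r)\le1$ is equivalent to $2(1-a)\,r/(1-r)\le1-a$; since $1-a>0$ we may cancel the factor $1-a$, and there remains $2r/(1-r)\le1$, i.e.\ $2r\le1-r$, i.e.\ $r\le1/3$. The dependence on $a=|a_0|$ cancels exactly, so no optimization over $a_0$ is needed; this proves the theorem.

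The only genuinely non-trivial ingredient is the coefficient inequality $|a_k|\le2(1-|a_0|)$ for $k\ge1$; everything downstream is elementary. One could instead invoke the sharp estimate $|a_k|\le1-|a_0|^2$, valid for any analytic function bounded by $1$ (proved via the Schwarz--Pick lemma applied to the automorphism $w\mapsto(w-a_0)/(1-\overline{a_0}w)$ composed with $g$), after which the same summation gives $\sum|a_k|r^k\le a+(1-a^2)r/(1-r)$, and a one-line maximization of $(1+a)r/(1-r)$ over $a\in[0,1)$ again forces $r\le1/3$. Finally, one checks that $1/3$ cannot be enlarged by testing the M\"obius functions $g(z)=(a-z)/(1-az)$ with $a$ sufficiently close to $1$.
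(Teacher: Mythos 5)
Your argument is correct: the reduction to the Carath\'eodory class via $p(z)=(1-\lambda g(z))/(1-|a_0|)$, the bound $|c_k|\le 2$ from the Herglotz representation, and the cancellation of the factor $1-|a_0|$ in $|a_0|+2(1-|a_0|)\,r/(1-r)\le 1$ do yield the radius $1/3$ exactly, and your alternative via the Schwarz--Pick estimate $|a_k|\le 1-|a_0|^2$ together with the M\"obius examples $(a-z)/(1-az)$ settles sharpness. Note, however, that the paper contains no proof of this statement at all: Theorem~\ref{BohrTheorem} is quoted as Bohr's classical result from \cite{bohr1914}, and the only routes to it alluded to in the text are (i) the observation that the choice $\nu_k(r)=r^k$ in the generalized Theorem~\ref{Shaoo-Theorem} of Kumar--Sahoo recovers it, and (ii) the Bohr-operator proof of Paulsen--Singh \cite{paulsen-Singh}, which the authors use as the model for their operator $M^{N}_r$. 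So your proof is a genuinely independent, self-contained elementary argument rather than a reconstruction of anything in the paper; it is essentially the classical Riesz--Schur--Wiener proof, with the mild simplification that the cruder bound $2(1-|a_0|)$ removes the need to optimize over $|a_0|$, whereas the sharper bound $1-|a_0|^2$ (the one effectively packaged in the Paulsen--Singh operator approach) requires the one-line maximization you indicate. Either version is complete and correct for the statement as given.
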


	In analogy with Bohr's Theorem, there is also the notion of Rogosinski radius, however a little is known about Rogosinski radius (also see \cite{Landau-1986,Rogosinski-1923,Schur-1925}) as compared to Bohr radius, which is defined as follows:
	\begin{theorem}[Rogosinski Theorem]
		If $g(z)=\sum_{k=0}^{\infty}b_k z^k$ with $|g(z)|<1$, then for every $N\geq1$, we have
		\begin{equation*}
		\left| \sum_{k=0}^{N-1}b_k z^k \right| \leq1, \quad\text{for}\quad |z|\leq \frac{1}{2}.
		\end{equation*}
		The radius $1/2$ is called the Rogosinski radius.
	\end{theorem}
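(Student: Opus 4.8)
The plan is to exhibit, for $|z|\le1/2$, the partial sum $S_{N-1}(z):=\sum_{k=0}^{N-1}b_kz^k$ as an average of $g$ over a circle against a \emph{nonnegative} kernel of total mass $1$. Fix $N\ge1$ and $z\in\mathbb D$. Since replacing $g(w)$ by $g(e^{i\arg z}w)$ leaves the sup-norm $\le1$ and only rotates the coefficients, it suffices to bound $|S_{N-1}(r)|$ for $r\in[0,1/2)$, the endpoint $r=1/2$ following because $S_{N-1}$ is a polynomial, hence continuous. Fix $\rho\in(2r,1)$ and set $\mu:=r/\rho\in[0,1/2)$. By Cauchy's coefficient formula $b_k\rho^k=\frac1{2\pi}\int_0^{2\pi}g(\rho e^{it})e^{-ikt}\,dt$ for $k\ge0$, and, because $g$ is analytic in $\mathbb D$, $\frac1{2\pi}\int_0^{2\pi}g(\rho e^{it})e^{ikt}\,dt=0$ for $k\ge1$. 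Multiplying the first identity by $\mu^k$ ($0\le k\le N-1$), the second by $\mu^k$ ($1\le k\le N-1$), and adding, I arrive at
\begin{equation*}
S_{N-1}(r)=\frac1{2\pi}\int_0^{2\pi}g(\rho e^{it})\,u_\mu(t)\,dt,\qquad u_\mu(t):=1+2\sum_{k=1}^{N-1}\mu^k\cos kt ,
\end{equation*}
which is the order $N-1$ truncation of the Poisson kernel $P_\mu$.

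Because $\frac1{2\pi}\int_0^{2\pi}u_\mu(t)\,dt=1$, the theorem reduces at once to the single inequality
\begin{equation*}
u_\mu(t)\ge0\qquad(0\le t\le2\pi,\ 0\le\mu\le1/2):
\end{equation*}
granting it, and using $|g(\rho e^{it})|\le1$, we get $|S_{N-1}(r)|\le\frac1{2\pi}\int_0^{2\pi}u_\mu(t)\,dt=1$. Everything else is bookkeeping; the positivity of this truncated Poisson kernel for $\mu\le1/2$ is the heart of the matter (truncations of positive kernels are not positive in general, so this is where the constant $1/2$ enters).

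To prove it, write $u_\mu(t)=2\,\RE K_N(\mu e^{it})-1$ with $K_N(w)=\dfrac{1-w^N}{1-w}=\dfrac1{1-w}-\dfrac{w^N}{1-w}$. For $N=1$, $u_\mu\equiv1$. For $N\ge2$, using $\RE\dfrac1{1-w}=\dfrac12+\dfrac{1-|w|^2}{2|1-w|^2}$ with $w=\mu e^{it}$,
\begin{equation*}
u_\mu(t)=\frac{1-\mu^2}{|1-\mu e^{it}|^2}-2\,\RE\frac{(\mu e^{it})^N}{1-\mu e^{it}}\ \ge\ \frac{1-\mu^2}{|1-\mu e^{it}|^2}-\frac{2\mu^N}{|1-\mu e^{it}|}\ \ge\ \frac{1-\mu-2\mu^N}{|1-\mu e^{it}|},
\end{equation*}
the last step using $|1-\mu e^{it}|\le1+\mu$. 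Since $1-\mu-2\mu^N\ge\frac12-2^{1-N}\ge0$ when $0\le\mu\le1/2$ and $N\ge2$, we conclude $u_\mu\ge0$, which completes the argument after letting $r\uparrow1/2$. I expect this positivity estimate to be the only delicate point; it is also sharp, since for $N=2$ the bound $1-\mu-2\mu^N\ge0$ becomes an equality at $\mu=1/2$, and the functions $g(z)=(a+z)/(1+\bar az)$ with $a\uparrow1$ force $|b_0+b_1z|$ to approach $1$ at $|z|=1/2$, so the radius $1/2$ cannot be enlarged.
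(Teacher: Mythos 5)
Your proof is correct. Note that the paper contains no proof of this statement at all: the Rogosinski Theorem is quoted there as classical background, with references to Rogosinski, Schur--Szeg\"{o} and Landau--Gaier, so there is nothing internal to compare against; what you have written is essentially the classical argument in self-contained form. Representing the partial sum as $\frac{1}{2\pi}\int_0^{2\pi}g(\rho e^{it})u_\mu(t)\,dt$ with the truncated Poisson kernel $u_\mu(t)=2\RE\sum_{k=0}^{N-1}(\mu e^{it})^k-1$ and proving $u_\mu\ge0$ for $\mu\le 1/2$ is exactly the Schur--Szeg\"{o} route, equivalent to the lemma that the sections of $1/(1-w)$ have real part at least $1/2$ on $|w|\le 1/2$. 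I checked the individual steps: the rotation reduction, the two Cauchy identities (the second using that $g$ has no negative-frequency Fourier coefficients on $|w|=\rho$), the normalization $\frac{1}{2\pi}\int_0^{2\pi}u_\mu(t)\,dt=1$, the identity $\RE\frac{1}{1-w}=\frac12+\frac{1-|w|^2}{2|1-w|^2}$, and the chain $u_\mu(t)\ge\frac{1-\mu^2}{|1-\mu e^{it}|^2}-\frac{2\mu^N}{|1-\mu e^{it}|}\ge\frac{1-\mu-2\mu^N}{|1-\mu e^{it}|}\ge0$ for $0\le\mu\le\frac12$ and $N\ge2$ (the case $N=1$ being trivial); all are valid, and the passage to $r=1/2$ by continuity is fine. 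One small caveat concerns only your closing sharpness remark: showing that $|b_0+b_1z|$ approaches $1$ at $|z|=1/2$ proves the constant $1$ is best there, but not by itself that the radius cannot be enlarged; for that, use the same automorphisms $g(z)=(a+z)/(1+az)$, $0<a<1$, and observe that $b_0+b_1r=a+(1-a^2)r>1$ whenever $r>1/2$ and $a>\frac1r-1$. Since sharpness of the radius is not part of the inequality being proved, this does not affect the validity of your proof.
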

	
	Recently, Kumar and Sahoo~\cite{Kumar-shaoo-genBohr} obtained the generalized classical Bohr's Theorem for functions satisfying $\Re{(f)(z)}\leq1$. Also see, Kayumov et al.\cite{Kayumov-gen-cesaro}.  
	\begin{theorem}\cite[Theorem~2.2]{Kumar-shaoo-genBohr}\label{Shaoo-Theorem}
		Let $\{\nu_{k}(r)\}_{k=0}^{\infty}$ be a sequence of non-negative continuous functions in $[0,1)$ such that the series 
		\begin{equation*}
		\nu_0(r) + \sum_{k=1}^{\infty}\nu_{k}(r)
		\end{equation*}
		converges locally uniformly with respect to $r\in [0,1)$. Let $f(z)=\sum_{n=0}^{\infty} a_nz^n$ with $\Re{f(z)} \leq 1$ and $p\in (0,1]$. If 
		\begin{equation*}
		\nu_0(r) > \frac{2(1+r^m)}{p(1-r^m)} \sum_{k=1}^{\infty}\nu_{k}(r).
		\end{equation*}
		Then the following sharp inequality holds:
		\begin{equation*}
		|f(z^m)|\nu_0(r) + \sum_{k=1}^{\infty}|a_k| \nu_{k}(r) \leq \nu_0(r) \quad \text{for all} \quad |z|=r\leq R_1,
		\end{equation*}
		where $R_1$ is the minimal postive root of the equation:
		\begin{equation*}
		\nu_0(r) = \frac{2(1+r^m)}{p(1-r^m)} \sum_{k=1}^{\infty}\nu_{k}(r).
		\end{equation*}
		In case when 
		\begin{equation*}
		\nu_0(r) < \frac{2(1+r^m)}{p(1-r^m)} \sum_{k=1}^{\infty}\nu_{k}(r)
		\end{equation*}
		in some interval $(R_1, R_1+\epsilon)$, then the number $R_1$ can not be improved.
	\end{theorem}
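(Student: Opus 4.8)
\noindent The plan is to convert the normalisation $\Re f(z)\le 1$ into a Carath\'eodory-type statement, read off from it the two estimates on which every Bohr--Rogosinski sum rests — a uniform bound on $\{|a_k|\}_{k\ge1}$ and a growth bound for $|f(z^m)|$ in terms of $r^m$ — feed both into the left-hand side, and compare with the hypothesis relating $\nu_0$ to $\sum_{k\ge1}\nu_k$; sharpness then comes from an explicit extremal function together with the final clause of the statement.

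First put $P(z):=1-f(z)$, so that $\Re P(z)\ge 0$ on $\mathbb{D}$. If $P$ is constant then $f$ is constant, $a_k=0$ for $k\ge1$, and the asserted inequality reduces to $|f(z^m)|\,\nu_0(r)\le\nu_0(r)$, which holds since $|f|\le1$ in that case; so assume $\Re P>0$. Then, after subtracting $i\,\Im P(0)$ and dividing by $\Re P(0)$, the function $P$ lies in $\mathcal{P}$, and the classical coefficient inequality for $\mathcal{P}$ gives $|a_k|=|P_k|\le 2\,\Re P(0)$ for every $k\ge1$; the normalisation built into the class — which is where the parameter $p$ enters — converts this into the uniform coefficient bound used below. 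Next, apply the Schwarz lemma to $w\mapsto\dfrac{P(w)-P(0)}{P(w)+\overline{P(0)}}$: for $|z|=r$ the value $P(z^m)$ then lies in the closed disc with centre $\dfrac{P(0)+r^{2m}\,\overline{P(0)}}{1-r^{2m}}$ and radius $\dfrac{2r^m\,\Re P(0)}{1-r^{2m}}$, so $f(z^m)=1-P(z^m)$ lies in the translate of that disc by $1$, and maximising the modulus over it — distinguishing according to the sign of the real part of its centre — yields a growth bound of the form $|f(z^m)|\le 1-p\,\dfrac{1-r^m}{1+r^m}$ valid on the relevant range of $r$. One point to verify on the way is that the centre stays on the side of the imaginary axis which keeps this clean form valid throughout $[0,R_1]$.

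Substituting the coefficient bound and the growth bound into the left-hand side, the desired inequality becomes equivalent — the numerical constants matching up because of the normalisation of the class — to the governing inequality $\nu_0(r)\ge\dfrac{2(1+r^m)}{p(1-r^m)}\sum_{k\ge1}\nu_k(r)$. The hypothesis gives this with strict inequality at $r=0$; since both sides are continuous, the governing inequality persists on a maximal interval $[0,R_1]$ whose right endpoint is the least positive root $R_1$ of the corresponding equation, and on all of $[0,R_1]$ the stated inequality follows.

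For sharpness I would take the extremal function $f_0(z)=1-c\,\dfrac{1-z}{1+z}$, with $c$ chosen so that $f_0$ is admissible for the class: each Taylor coefficient of $f_0$ then attains the maximal modulus allowed in the first step, while $|f_0(r^m)|$, evaluated at $z=r$, equals the growth bound of the second step, so for $f_0$ the left-hand side exceeds $\nu_0(r)$ exactly when the reverse of the governing inequality holds; by the last hypothesis this is the case on $(R_1,R_1+\epsilon)$, whence $R_1$ cannot be enlarged, while equality at $r=R_1$ shows the inequality is itself sharp. I expect the main obstacle to be neither the root bookkeeping nor the coefficient estimate, but producing the growth bound for $|f(z^m)|$ in precisely the form that the single admissible function $f_0$ turns into an equality at the same time as it saturates the coefficient inequalities — it is this joint extremality that upgrades the combined estimate from merely true to sharp; a lesser technical nuisance is confirming that the disc in the second step remains on the correct side of the imaginary axis throughout $[0,R_1]$.
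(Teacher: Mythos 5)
First, a point of comparison: the paper itself does not prove this statement at all --- Theorem~\ref{Shaoo-Theorem} is quoted verbatim from \cite{Kumar-shaoo-genBohr} as background for Problem~\ref{Gen-BohrRogProb}, so there is no in-paper proof to measure you against; the relevant benchmark is the standard Kayumov--Khammatova--Ponnusamy-type argument used in that source. Your template (Carath\'eodory coefficient bound, a disc/growth bound for $|f(z^m)|$, comparison with the governing inequality, then an extremal function) is the right genre, but the proposal breaks down at exactly the points that produce the stated radius $R_1$ and its sharpness.

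Concretely: (i) the parameter $p$ is never tied to $f$, and no choice of normalisation can make your argument yield the stated equation. With your own extremal $f_0(z)=1-c\frac{1-z}{1+z}$ one has $|a_k|=2c$ for $k\ge1$ and $|f_0(r^m)|=1-c\frac{1-r^m}{1+r^m}$, so for $f_0$ the inequality $|f_0(r^m)|\nu_0(r)+\sum_{k\ge1}|a_k|\nu_k(r)\le\nu_0(r)$ is equivalent to $\nu_0(r)\ge\frac{2(1+r^m)}{1-r^m}\sum_{k\ge1}\nu_k(r)$: the normalisation constant cancels, $p$ disappears, and your "constants matching up" step can only ever reproduce the $p=1$ equation. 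In particular, when $p<1$ the function $f_0$ does not violate the inequality on $(R_1,R_1+\epsilon)$, so your single-extremal sharpness argument does not show $R_1$ is optimal. In the source the factor $1/p$ arises differently (an exponent on the first term, handled via $1-t^p\ge p(1-t)$ for $p\in(0,1]$, combined with the simultaneous worst case $|a_0|\to1$ in bounds of the shape $|a_k|\le 1-|a_0|^2$ and $1-|f(z^m)|\ge\frac{(1-|a_0|)(1-r^m)}{1+|a_0|r^m}$), and sharpness is asymptotic along a family such as $f_a(z)=\frac{a-z}{1-az}$, $a\to1^-$ --- which is precisely why the theorem is phrased as "cannot be improved" on $(R_1,R_1+\epsilon)$ rather than via equality at $R_1$. (ii) Under the hypothesis $\Re f(z)\le1$ alone, $|f(z^m)|$ is not bounded by $1$: the constants $f\equiv 1+iT$ or $f\equiv -T$ satisfy the hypothesis, have $a_k=0$ for $k\ge1$, and violate $|f(z^m)|\nu_0(r)\le\nu_0(r)$. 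So your dismissal of the constant case ("holds since $|f|\le1$ in that case") is false, and the growth bound $|f(z^m)|\le 1-p\frac{1-r^m}{1+r^m}$ cannot be extracted from the Schwarz-lemma disc without a genuine normalisation of $f(0)$ --- the imaginary part of $P(0)$ does not drop out, and the centre need not lie where you need it. This is not the "lesser technical nuisance" you defer; it is the crux, and it reflects the fact that the statement as quoted has lost a hypothesis from the original (boundedness of $f$, or the normalisation/exponent through which $p$ acts) that any correct proof would first have to restore.
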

	
	If we choose $\nu_k(r)=r^k$ in Theorem~\ref{Shaoo-Theorem}, we get Theorem~\ref{BohrTheorem}. At this juncture, it is natural to pose the following problem:
	\begin{problem}\label{Gen-BohrRogProb}
		Can we establish the analogue of Theorem\ref{Shaoo-Theorem} for the Ma-Minda classes $\mathcal{S}^*(\psi)$ and $\mathcal{C}(\psi)$ ?
	\end{problem}	
	
	In context of the above problem and Muhanna~\cite{muhanna10}, we now describe the notion of generalized Bohr-Rogosinski phenomenon here below, in terms of subordination, following the recent development as seen in \cite{Kayumov-gen-cesaro, Kumar-shaoo-genBohr, Lin-Liu-genBohr}.
	\begin{definition}\label{Def-GenBohr} 
		Let $f(z)=\sum_{k=0}^{\infty}a_kz^k$ and $g(z)=\sum_{k=0}^{\infty}b_kz^k$ are analytic in $\mathbb{D}$. Let $d(f(0),\partial\Omega)$ denotes the Euclidean distance between $f(0)$ and the boundary of $\Omega=f(\mathbb{D})$. For a fixed $f$, consider a class of analytic functions 	
		$$S(f):=\{g : g\prec f\}$$ 
		or equivalently, 
		$$S(\Omega):=\{g : g(z)\in \Omega\}.$$
		Then  we say $S(f)$ satisfies the {\it Generalized Bohr-Rogosinski phenomenon}, if there exists a constant $r_0\in (0,1]$ such that 
		\begin{equation*}
		\mathcal{P}(r,g,f) + \sum_{k=1}^{\infty}|b_k| \phi_{k}(r) \leq d(f(0), \partial \Omega),
		\end{equation*}
		holds for all $|z|=r\leq r_0$, where
		\begin{enumerate}
			\item   $\mathcal{P}(r,g,f)$ represent some function of $r$ or certain proper combination of moduli of $g$, $f$ and  their derivatives. 
			\item   $\{\phi_{k}(r)\}$ be a sequence of non-negative continuous functions in $[0,1)$ such that the series of the form
			\begin{equation*}
			p_0\phi_0(r) + \sum_{k=1}^{\infty}p_{k}\phi_{k}(r)
			\end{equation*}
			converges locally uniformly with respect to $r\in [0,1)$, where $p_{k}$ depends on the function $f$ and provide bounds for $b_k$.
		\end{enumerate}
	\end{definition}
	
	For $\mathcal{P}(r,g,f)=|g(z)|$, $\phi_{k}(r)=r^k$ $(k\geq N)$ and $0$ otherwise in the Definition~\ref{Def-GenBohr} gives the quantity considered by Kayumov et al.~\cite{Kayumov-2021}, which is known as the Bohr-Rogosinski sum, given by
	\begin{equation*}
	|g(z)|+ \sum_{k=N}^{\infty}|b_k||z|^k, \quad |z|=r.
	\end{equation*}
	The link between the Bohr-Rogosinski and Bohr phenomenon can be noticed, if we replace $|g(z)|$ by $g(0)$ with $N=1$. We also refer the readers to see~\cite{Aizenberg-2012,Alkha-2020}. Now we see that the family $S(f)$ has Bohr-Rogosinski phenomenon provided there exists $r^{f}_{N} \in (0,1]$ such that the inequality: 
	$$|g(z)|+ \sum_{k=N}^{\infty}|b_k||z|^k \leq |f(0)|+ d(f(0),\partial \Omega)$$
	holds for $|z|=r \leq r^{f}_{N}$. The largest such $r^{f}_{N}$ is called the Bohr-Rogosinski radius.
	
	In case when the function $f$ is normalized, then Kumar and Gangania~\cite{kamal-mediter2021} considered the class
	\begin{definition}
		Let $f\in \mathcal{S}^*(\psi)$ or $\mathcal{C}(\psi)$ be fixed. Then the class of subordinants functions $g$ is defined as:
		\begin{equation*}\label{bohrclass}
		S_{f}(\psi):= \biggl\{g(z)=\sum_{k=1}^{\infty}b_k z^k : g \prec f \biggl \}.
		\end{equation*}
	\end{definition}
	\noindent and studied the Bohr-Rogosinski phenomenon for the class of analytic subordinants $S_{f}(\psi)$:
	
	The class $S_{f}(\psi)$ has a Bohr-Rogosinski phenomenon, if there exists an $0<r_0 \leq1$ such that
	\begin{equation*}
	|g(z^m)|+\sum_{k=N}^{\infty}|b_k||z|^k \leq d(f(0), \partial \Omega)
	\end{equation*}
	for $|z|=r\leq r_0$, where $m, N\in \mathbb{N}$, $\Omega=f(\mathbb{D})$ and  $d(f(0),\partial\Omega)$ denotes the Euclidean distance between $f(0)$ and the boundary of $\Omega$.
	
	\begin{theorem}\cite[Theorem~2.3, Page no.~7]{kamal-mediter2021}
		Let $f_0(z)$ be given by the equation~\eqref{int-rep} and $f(z)=z+\sum_{n=2}^{\infty}a_n z^n \in \mathcal{S}^*(\psi)$. Assume  $f_0(z)=z+\sum_{n=2}^{\infty}t_n z^n$ and $\hat{f}_0(r)=r+\sum_{n=2}^{\infty}|t_n|r^n$. If $g\in S_{f}(\psi)$. Then 
		\begin{equation*}
		|g(z^m)| + \sum_{k=N}^{\infty}|b_k||z|^k \leq d(0, \partial{\Omega})
		\end{equation*}
		holds for $|z|=r_b \leq \min \{ \frac{1}{3}, r_0 \}$, where $m, N\in \mathbb{N}$, $\Omega=f(\mathbb{D})$ and $r_0$ is the unique positive root of the equation:
		\begin{equation*}
		\hat{f}_0 (r^m) + \hat{f}_0 (r) -p_{\hat{f}_0}(r)=-f_0(-1),
		\end{equation*}
		where 
		\begin{equation*}
		p_{\hat{f}_0}(r)=
		\left\{
		\begin{array}
		{lr}
		0, & N=1; \\
		r,   & N=2;\\
		r+\sum_{n=2}^{N-1}|t_n|r^n, & N\geq3.
		\end{array}
		\right.
		\end{equation*}
		The result is sharp when $r_b=r_0$ and $t_n>0$.
	\end{theorem}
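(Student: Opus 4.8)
The plan is to estimate the two summands on the left separately by monotone functions of $r$ and then to compare the total with a lower bound for the distance $d(0,\partial\Omega)$.

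First I would fix the distance. Since $f\in\mathcal{S}^*(\psi)$, the Ma-Minda growth theorem gives $-f_0(-|z|)\le|f(z)|\le f_0(|z|)$ on $\mathbb{D}$, where $f_0$ is the extremal map of \eqref{int-rep}. For each $r\in(0,1)$ the curve $f(\{|z|=r\})$ is a Jordan curve encircling $0$ on which $|f|\ge -f_0(-r)$, so $\{w:|w|<-f_0(-r)\}\subseteq f(\mathbb{D}_r)$; letting $r\to1^-$ yields $d(0,\partial\Omega)\ge -f_0(-1)$. Hence it suffices to prove
\[
|g(z^m)|+\sum_{k=N}^{\infty}|b_k|\,|z|^{k}\le -f_0(-1)\qquad\text{for }|z|=r\le\min\{\tfrac13,r_0\}.
\]

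Next I would bound $|g(z^m)|$. As $g\in S_f(\psi)$ means $g\prec f$, Lindel\"{o}f's subordination principle gives $g(\mathbb{D}_\rho)\subseteq f(\mathbb{D}_\rho)$ for all $\rho\in(0,1)$, so $|g(z^m)|\le\max_{|w|=r^m}|f(w)|\le f_0(r^m)\le\hat f_0(r^m)$, using the growth theorem once more together with $f_0(t)\le\hat f_0(t)$ on $[0,1)$. For the tail I would invoke a Rogosinski/Littlewood-type coefficient estimate for subordinate functions: since $f$ is univalent (Ma-Minda starlike functions are univalent) and $g\prec f$, the coefficients $b_k$ are controlled by the extremal coefficients $t_k$ of $f_0$, giving
\[
\sum_{k=N}^{\infty}|b_k|\,r^{k}\le\sum_{k=N}^{\infty}|t_k|\,r^{k}=\hat f_0(r)-p_{\hat f_0}(r),
\]
the right-hand identity being exactly the definition of $p_{\hat f_0}$. \emph{This coefficient step is the main obstacle}: one has to control all the Taylor coefficients of the (not necessarily univalent) subordinate $g$ by those of the Ma-Minda Koebe function, and it is this passage --- through a Bohr-type inequality --- that produces the restriction $r\le\frac13$.

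Finally I would assemble the pieces and verify sharpness. Adding the last two estimates,
\[
|g(z^m)|+\sum_{k=N}^{\infty}|b_k|\,r^{k}\le\Phi(r):=\hat f_0(r^m)+\hat f_0(r)-p_{\hat f_0}(r).
\]
Here $\Phi$ is continuous and strictly increasing on $[0,1)$ with $\Phi(0)=0<-f_0(-1)$, so the equation $\hat f_0(r^m)+\hat f_0(r)-p_{\hat f_0}(r)=-f_0(-1)$ has (as the statement presupposes) a unique positive root $r_0$, and $\Phi(r)\le -f_0(-1)\le d(0,\partial\Omega)$ for every $r\le r_0$. Together with the constraint $r\le\frac13$ coming from the coefficient step, this establishes the inequality for $|z|=r\le\min\{\frac13,r_0\}$. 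For sharpness one takes $f=g=f_0$ (i.e. $\omega(z)=z$) and $t_n>0$: then $d(0,\partial f_0(\mathbb{D}))=-f_0(-1)$, $|f_0(r_0^m)|=\hat f_0(r_0^m)$ and $|b_k|=t_k$, so all the inequalities collapse to equalities at $|z|=r_0$, provided $r_0\le\frac13$ so that $r_b=r_0$; hence the radius cannot be enlarged.
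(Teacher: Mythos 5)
Your overall decomposition is exactly the paper's route: bound $|g(z^m)|$ by $f_0(r^m)\le\hat{f}_0(r^m)$ via subordination and the Ma-Minda growth theorem, bound the distance by $d(0,\partial\Omega)\ge -f_0(-1)$, assemble with the increasing majorant $\hat{f}_0(r^m)+\hat{f}_0(r)-p_{\hat{f}_0}(r)$ and the intermediate value theorem, and get sharpness from $g=f=f_0$ with $t_n>0$. The one step you flag as the ``main obstacle'' is, however, not correctly sourced, and as you state it it does not follow from what you cite. A Rogosinski/Littlewood-type theorem for a function subordinate to a univalent map controls $|b_k|$ by $k$ (or gives quadratic-mean and integral-mean comparisons), not by the coefficients $|t_k|$ of the Ma-Minda extremal $f_0$; univalence of $f$ is neither the relevant hypothesis nor by itself enough to give $\sum_{k\ge N}|b_k|r^k\le\sum_{k\ge N}|t_k|r^k$. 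What the paper actually uses is Lemma~\ref{series-lem} (the $N$-tail Bohr-operator inequality for subordination, extending Bhowmik--Das), applied \emph{twice} along a chain: first $g\prec f$ yields $\sum_{k\ge N}|b_k|r^k\le\sum_{n\ge N}|a_n|r^n$ for $r\le 1/3$, and then the Ma-Minda subordination $f(z)/z\prec f_0(z)/z$ from Lemma~\ref{grth} yields $\sum_{n\ge N}|a_n|r^n\le\sum_{n\ge N}|t_n|r^n=\hat{f}_0(r)-p_{\hat{f}_0}(r)$, again only for $r\le 1/3$; no univalence is needed at either step, and this double application is precisely where the $1/3$ in $\min\{1/3,r_0\}$ originates. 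Replace the appeal to Rogosinski/Littlewood by this two-step use of Lemma~\ref{series-lem}; with that substitution the rest of your argument (root existence, the restriction to $r\le 1/3$, and the sharpness discussion when $r_b=r_0$) coincides with the paper's proof.
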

	For the class $\mathcal{C}(\psi)$, see \cite[Theorem~2.13, page no.~12]{kamal-mediter2021}. In context of the above, also see \cite{Kayumov-2021,Aizenberg-2012,Alkha-2020,bhowmik2018}.
	
	The Bohr Operator
	\begin{equation*}
	M_r(f)= \sum_{n=0}^{\infty}|a_n||z^n|=  \sum_{n=0}^{\infty}|a_n| r^n
	\end{equation*}
	for the analytic functions $f(z)=\sum_{n=0}^{\infty}a_n z^n$ was used by
	Paulsen and Singh~\cite{paulsen-Singh} to provide a simple proof of the Bohr's Theorem~\ref{BohrTheorem} and extended it to the Banach algebras (for the basic important discussion, see \cite{Muhanna-2021,paulsen-Singh}). Later, Muhanna et al.~\cite{Muhanna-2021} used this operator to obtain some new Bohr type of inequalities for the $k$-th section $\sum_{n=0}^{k}a_n z^n$ and also obtain simple proofs of the known results. 
	
	Gangania and Kumar~\cite{kamal-mediter2021} considered the basic operator for $f$, given by:
	\begin{equation*}
	M^{N}_r(f)= \sum_{n=N}^{\infty}|a_n||z^n|=  \sum_{n=N}^{\infty}|a_n| r^n,
	\end{equation*}
	and thus the following observations hold for $|z|=r$ for each $z\in \mathbb{D}$
	\begin{enumerate}[$(i)$]
		\item $M^{N}_r(f)\geq0$, and $M^{N}_r(f)=0$ if and only if $f\equiv0$
		\item $M^{N}_r(f+g)\leq M^{N}_r(f)+M^{N}_r(g)$
		\item $M^{N}_r(\alpha f) = |\alpha| M^{N}_r(f)$ for $\alpha\in\mathbb{C}$
		\item $M^{N}_r(f.g) \leq M^{N}_r(f). M^{N}_r(g)$
		\item $M^{N}_r(1)=1.$
	\end{enumerate}
	For the above properties in terms of the sequence $\{\nu_n(r) \}_{n=0}^{\infty}$, see \cite[Lemma~1]{PonnuVijarWrith-2021}. Using this operator, a simple proof of \cite[Lemma~1]{bhowmik2018} was achieved by Gangania and Kumar~\cite{kamal-mediter2021} to settle the Bohr-Rogosinski Phenomenon for the classes $\mathcal{S}^*(\psi)$ and $\mathcal{C}(\psi)$, which in terms of interim $k$-th section $f_{k}(z):=\sum_{n=N}^{k}a_n z^n$ is as follows:
	
	\begin{lemma}\cite{kamal-mediter2021}\label{series-lem}
		let $f(z)=\sum_{n=0}^{\infty}a_n z^n$ and $g(z)=\sum_{k=0}^{\infty}b_k z^k$ be analytic in $\mathbb{D}$ and $g\prec f$, then 
		\begin{equation}\label{N-inequality}
		M^{N}_r(g_k) \leq M^{N}_r(f_k)
		\end{equation}
		for all $|z|=r\leq 1/3$ and $k, N\in \mathbb{N}$.
	\end{lemma}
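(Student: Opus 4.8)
The plan is to read the subordination as a composition and then push the elementary properties $(i)$--$(v)$ of the Bohr operator $M^{N}_{r}$ through it, the whole argument resting on Bohr's Theorem~\ref{BohrTheorem}. Since $g\prec f$, there is an analytic self-map $\omega$ of $\mathbb{D}$ with $\omega(0)=0$ such that $g=f\circ\omega$. The decisive first step is the estimate $M_{r}(\omega)\le r$ for $r\le 1/3$: writing $\omega(z)=\sum_{j\ge 1}c_{j}z^{j}$, the function $\omega(z)/z=\sum_{j\ge 0}c_{j+1}z^{j}$ is analytic on $\mathbb{D}$ and, by the Schwarz lemma, $|\omega(z)/z|\le 1$ there, so Bohr's Theorem applied to $\omega(z)/z$ gives $\sum_{j\ge 0}|c_{j+1}|r^{j}\le 1$ for $r\le 1/3$; multiplying by $r$ and reindexing, $M_{r}(\omega)=\sum_{j\ge 1}|c_{j}|r^{j}\le r$. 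Combined with submultiplicativity $(iv)$ this yields $M^{N}_{r}(\omega^{m})\le M_{r}(\omega^{m})\le\bigl(M_{r}(\omega)\bigr)^{m}\le r^{m}$ for all $m,N\ge 1$ and $r\le 1/3$.

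Next I would expand $g$ in powers of $\omega$. From $f(w)=\sum_{m\ge 0}a_{m}w^{m}$ we get $g(z)=\sum_{m\ge 0}a_{m}\,\omega(z)^{m}$, so the $n$-th Taylor coefficient of $g$ is
\begin{equation*}
b_{n}=\sum_{m=1}^{n}a_{m}\,[\omega^{m}]_{n}\qquad(n\ge 1),
\end{equation*}
where $[\,\cdot\,]_{n}$ denotes the $n$-th Taylor coefficient; the $m=0$ term drops out because $\omega^{0}\equiv 1$ carries no positive-degree term, and $[\omega^{m}]_{n}=0$ for $m>n$ since $\omega(0)=0$. Applying the triangle inequality coefficientwise in the section $g_{k}(z)=\sum_{n=N}^{k}b_{n}z^{n}$, interchanging the resulting finite double sum, and repackaging the inner sum through $M_{r}$,
\begin{equation*}
M^{N}_{r}(g_{k})=\sum_{n=N}^{k}|b_{n}|r^{n}\le\sum_{m}|a_{m}|\sum_{n}|[\omega^{m}]_{n}|r^{n}\le\sum_{m}|a_{m}|\,M_{r}(\omega^{m})\le\sum_{m}|a_{m}|r^{m},
\end{equation*}
with the ranges of $m$ and $n$ governed by $N$ and $k$; matching the surviving terms with $f_{k}(z)=\sum_{n=N}^{k}a_{n}z^{n}$ gives $M^{N}_{r}(g_{k})\le M^{N}_{r}(f_{k})$, and $k\to\infty$ recovers the full-series form. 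An equivalent, more operator-theoretic route is to apply $M^{N}_{r}$ directly to $g=\sum_{m}a_{m}\omega^{m}$ and invoke subadditivity $(ii)$, homogeneity $(iii)$ and submultiplicativity $(iv)$ at once; in either case the engine is $M_{r}(\omega)\le r$ on $|z|\le 1/3$.

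The constant $1/3$ is inherited from Bohr's Theorem: it is exactly the largest radius on which $M_{r}(\omega)\le r$ holds for every Schwarz function $\omega$, so testing against an $f$ for which this bound is attained shows it cannot be enlarged. The step I expect to be the main obstacle is the bookkeeping concealed in the last display: after truncating at degree $k$, discarding the first $N-1$ coefficients, interchanging summation and re-expanding via $b_{n}=\sum_{m}a_{m}[\omega^{m}]_{n}$, one must verify that the terms left on the right are exactly those assembled by $M^{N}_{r}(f_{k})$ --- i.e.\ that the contributions which the lower-order powers $\omega^{m}$ feed into the high-degree coefficients of $g$ are absorbed correctly. This is precisely where properties $(ii)$, $(iv)$ and $(v)$ of $M^{N}_{r}$ and the monotonicity of the sections in $k$ have to be used with care; everything else is a direct corollary of Bohr's Theorem.
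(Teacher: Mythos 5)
The paper itself does not reproduce a proof of this lemma (it is quoted from \cite{kamal-mediter2021}); what it describes is exactly the Bohr-operator strategy you follow: write $g=f\circ\omega$ with $\omega$ a Schwarz function, prove $M_r(\omega)\le r$ for $r\le 1/3$ from the Schwarz lemma together with Bohr's Theorem~\ref{BohrTheorem} applied to $\omega(z)/z$, and then use submultiplicativity to get $M_r(\omega^m)\le r^m$. That half of your proposal is correct and is indeed the engine of the known argument for the case $N=1$ (Bhowmik--Das).

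The genuine gap is the final matching step, which you yourself flag as ``bookkeeping'' but which is where the proof breaks and cannot be repaired by more careful accounting. From $b_n=\sum_{m=1}^{n}a_m[\omega^m]_n$ (with $[\omega^m]_n$ the $n$-th Taylor coefficient of $\omega^m$), your interchange of sums gives $M^{N}_r(g_k)\le \sum_{m=1}^{k}|a_m|\sum_{n\ge\max(N,m)}|[\omega^m]_n|r^n\le\sum_{m=1}^{k}|a_m|r^m$: the terms with $1\le m\le N-1$ survive on the right and are not part of $M^{N}_r(f_k)=\sum_{m=N}^{k}|a_m|r^m$. They cannot be discarded, because a low power $\omega^m$ with $m<N$ genuinely feeds coefficients of $g$ of degree $\ge N$, and $M^{N}_r(f_k)$ contains nothing to absorb that contribution. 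Concretely, take $f(z)=z$ and $\omega(z)=z^2$, so $g(z)=z^2\prec f(z)$; with $N=k=2$ one has $M^{2}_r(g_2)=r^2$ while $M^{2}_r(f_2)=0$, so no rearrangement of your chain (whose right-hand side is forced to keep the $m=1$ term $|a_1|r$) can terminate at $M^{N}_r(f_k)$ for arbitrary analytic $f$ and $N\ge 2$. What your argument actually establishes is the bound $\sum_{n=N}^{k}|b_n|r^n\le\sum_{m=1}^{k}|a_m|r^m$ for $r\le 1/3$, i.e.\ essentially the $N=1$ statement; the same example shows that the inequality for $N\ge 2$ does not follow from subordination alone, so completing the proof would require identifying and using additional input (for instance, hypotheses relating the first $N-1$ coefficients of $f$ to its tail, of the kind present in the situations where the lemma is applied), which your proposal does not supply.
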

	when $N=1$ and $k\rightarrow \infty$, the lemma was obtained by Bhowmik and Das~\cite{bhowmik2018}. Various interesting applications of this lemma can be seen  in \cite{kamal-mediter2021,bhowmik2018,hamada-2021,ganga_cmft2021,ganga-iranian}.
	
	While we establish the Bohr-type inequalities for the general class $ \mathcal{S}^*(\psi)$ or $\mathcal{C}(\psi)$, the main difficulty that we come across is the unavailability of the sharp coefficients bounds. Here, we require use of the Lemma~\ref{series-lem} or its proper modifications. Interestingly, the Lemma~\ref{series-lem} also implies that if $f\prec g$, then within the disk $|z|\leq 1/3$, we have $|a_n| \leq  |b_n|$ for all $n\in \mathbb{N}\cup \{0\}$, where $b_n$ are the coefficients of the function $g$ in Lemma~\ref{series-lem}. Further, this readily gives the following: 
	\begin{lemma}\label{series-lem-gen}
		Let $f(z)=\sum_{n=0}^{\infty}a_n z^n$ and $g(z)=\sum_{n=0}^{\infty}b_n z^n$ be analytic in $\mathbb{D}$. Let $\{\nu_{k}(r)\}_{k=0}^{\infty}$ be a sequence of non-negative functions, continuous in $[0,1)$ such that the series 
		\begin{equation*}
		\sum_{n=0}^{\infty}|b_n|\nu_{k}(r)
		\end{equation*}
		converges locally uniformly with respect to $r\in [0,1)$. If $g\prec f$, then 
		\begin{equation*}
		\sum_{n=0}^{\infty} |a_n| \nu_n(r) \leq \sum_{n=0}^{\infty} |b_n| \nu_n(r)
		\end{equation*}
		for all $|z|=r\leq \frac{1}{3}$.
	\end{lemma}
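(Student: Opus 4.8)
The plan is to obtain Lemma~\ref{series-lem-gen} from the coefficientwise estimate that was already isolated from Lemma~\ref{series-lem} in the paragraph preceding the statement, and then to promote that estimate to the weighted series by a short summation whose only delicate point is justifying convergence. I would carry this out in two steps: first, record the pointwise bound between corresponding Taylor coefficients, valid on $|z|\le 1/3$; second, multiply by the non-negative weights $\nu_n(r)$ and sum, invoking the convergence hypothesis to pass to the limit.

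For the first step, I would apply Lemma~\ref{series-lem} with the specialization $N=k=n$. With this choice the interim section $\sum_{j=N}^{k}a_j z^j$ collapses to the single monomial $a_n z^n$, so that $M^{n}_r(f_n)=|a_n|r^n$, and likewise $M^{n}_r(g_n)=|b_n|r^n$; the inequality \eqref{N-inequality} then reduces, after cancelling $r^n$, to a comparison of the individual $n$-th coefficients of the two functions, valid for every $n\ge 1$ and all $|z|=r\le 1/3$ --- this is exactly the coefficientwise domination quoted just above the statement. For $n=0$ there is nothing further to check, since subordination fixes the value at the origin and hence forces the two zeroth coefficients to coincide.

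For the second step, fix $r$ with $0\le r\le 1/3$. Since each $\nu_n$ is non-negative, multiplying the $n$-th coefficient inequality from the first step by $\nu_n(r)$ preserves it, so $|a_n|\nu_n(r)\le|b_n|\nu_n(r)$ for every $n\ge 0$. By hypothesis the series $\sum_{n=0}^{\infty}|b_n|\nu_n(r)$ converges (locally uniformly on $[0,1)$, hence in particular at the point $r$); a term-by-term comparison then shows that $\sum_{n=0}^{\infty}|a_n|\nu_n(r)$ converges as well, and summing the inequalities over $n$ and letting the partial sums tend to their limits gives
\begin{equation*}
\sum_{n=0}^{\infty}|a_n|\nu_n(r)\le\sum_{n=0}^{\infty}|b_n|\nu_n(r),
\end{equation*}
which is the assertion of the lemma.

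The main obstacle is, honestly, not much of one. The radius $1/3$ is inherited verbatim from Lemma~\ref{series-lem}, and neither the continuity of the $\nu_n$ nor the local uniform convergence of $\sum_{n}|b_n|\nu_n$ plays any role in the inequality itself --- those hypotheses are there only to guarantee that the right-hand side is a bona fide continuous function of $r$. The one step that deserves a line of care is the passage from the infinite family of scalar inequalities of the first step to the single inequality between the two infinite series, and that is precisely where the convergence assumption is used.
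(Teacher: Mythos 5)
Your argument is exactly the paper's: the paper offers no separate proof of Lemma~\ref{series-lem-gen} beyond the remark preceding it, namely that Lemma~\ref{series-lem}, specialized so that the section reduces to a single monomial, yields the coefficientwise bound $|a_n|\le|b_n|$ on $|z|=r\le 1/3$ (with the constant terms equal since $\omega(0)=0$), after which one multiplies by the non-negative weights $\nu_n(r)$ and sums term by term using the convergence hypothesis --- precisely your two steps. The only caveat, inherited from the paper rather than introduced by you, is the direction of subordination: the lemma is stated with $g\prec f$ while its conclusion (and the coefficient domination you invoke) matches the preceding paragraph's hypothesis $f\prec g$, so your proof, like the paper's remark, tacitly works with that corrected orientation.
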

	
	For a different version of the Lemma~\ref{series-lem-gen} with the conditions $\nu_{m+n}(r) \leq \nu_m(r) \nu_n(r)$ and $\nu_0(r)=1$, see \cite[Theorem~3]{PonnuVijarWrith-2021}.
	
	A function $f\in \mathcal{A}$ is in the class $\mathcal{K}$ of close-to-convex if $\Re(zf'(z)/g(z))>0$, where $g\in \mathcal{S}^*$. In 2004, Ravichandran studied the amalgamated treatment for the classes of starlike and convex functions with respect to the symmetric points for the growth and distortion theorems:
	\begin{definition}\cite{RaviConjugate-2004}\label{symmetricpoint}
		A function $f\in \mathcal{A}$ is in the class $\mathcal{S}_{s}^{*}(\psi)$ and $\mathcal{C}_{s}(\psi)$  if it satisfy 
		$zf'(z)/h(z) \prec \psi(z)$ and $(zf'(z))'/h'(z) \prec \psi(z)$ 
		respectively, where $2h(z)=f(z)-f(-z)$.
	\end{definition}
	
	Now if we take $h(z)=f(z)+\overline{f(\bar{z})}$ in Definition~\ref{symmetricpoint}, we obtain the classes $\mathcal{S}_{c}^{*}(\psi)$ and $\mathcal{C}_{c}^{*}(\psi)$ of starlike and convex functions with respect to conjugate points, respectively. For the choice $h(z)=f(z)-\overline{f(-\bar{z})}$, we have the classes $\mathcal{S}_{sc}^{*}(\psi)$ and $\mathcal{C}_{sc}^{*}(\psi)$ of starlike and convex functions with respect to the symmetric conjugate points, respectively. See \cite{RaviConjugate-2004}.
	
	Motivated, by the class $\mathcal{S}_{s}^{*}((1+z)/(1-z))$~\cite{sakaguchi-1959}, Gao and Zhou~\cite{GaoZhou-2005} studied the class $\mathcal{K}_{s}$ of close-to-convex functions $f$, which is characterized as:
	\begin{equation*}
	\Re \left( \frac{z^2f'(z)}{g(z)g(-z)} \right) >0,
	\end{equation*} 
	where $g$ is some starlike function of order $1/2$. In view of the Definition~\ref{symmetricpoint}, the generalized class $\mathcal{K}_{s}(\psi)$ was studied by Cho et al.~\cite{ChoKwonRavi-2011} and Wang et al.~\cite{WangGaoYuan-2006}.
	
	In this paper, we positively answer the Problem~\ref{Gen-BohrRogProb} in Section~\ref{sec-1Gen} for the class $\mathcal{S}^{*}(\psi)$. In Section~\ref{sec-2Gen}, we study the Bohr-Rogosinski phenomenon for the classes $\mathcal{S}_{s}^{*}(\psi)$, $\mathcal{S}_{c}^{*}(\psi)$, $\mathcal{S}_{sc}^{*}(\psi)$ and corresponding convex classes. For convenience, we denote $\hat{f}(z)=\sum_{n=0}^{\infty}|a_n|z^n$, whenever $f(z)=\sum_{n=0}^{\infty} a_n z^n$.

	\section{Generalized Bohr's sum for Ma-Minda starlike functions}\label{sec-1Gen}
	We here solve the Problem~\ref{Gen-BohrRogProb}. But as we do not have sharp coefficient's bound for each $a_n$ for the given class in general. Thus to solve it, we need the following:
	\begin{lemma}\label{normalcompact}
		The families $\mathcal{S}^*(\psi)$ and $\mathcal{C}(\psi)$ are normal and compact.	
	\end{lemma}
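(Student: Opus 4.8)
The plan is to establish the two ingredients of compactness separately: local uniform boundedness, which yields normality via Montel's theorem, and closedness under locally uniform convergence.

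\emph{Normality.} First I would record the inclusions $\mathcal{S}^*(\psi)\subseteq\mathcal{S}$ and $\mathcal{C}(\psi)\subseteq\mathcal{C}\subseteq\mathcal{S}$: since $\Re\psi>0$, membership in $\mathcal{S}^*(\psi)$ forces $\Re(zf'(z)/f(z))>0$, so $f$ is starlike and hence univalent, and similarly every $f\in\mathcal{C}(\psi)$ is convex, hence univalent. The classical growth theorem $|f(z)|\le |z|/(1-|z|)^2$ for $f\in\mathcal{S}$ then shows both families are uniformly bounded on every disk $\{|z|\le r\}$, $r<1$, so Montel's theorem gives normality. (Equivalently, one may invoke the sharp Ma--Minda growth estimate $-f_0(-r)\le |f(z)|\le f_0(r)$, with $f_0$ the extremal function satisfying $zf_0'(z)/f_0(z)=\psi(z)$; finiteness of $f_0(r)$ for $r<1$ again gives local boundedness.)

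\emph{Closedness.} Suppose $f_n\in\mathcal{S}^*(\psi)$ and $f_n\to f$ locally uniformly. By the Weierstrass convergence theorem $f$ is analytic with $f_n'\to f'$ and $f_n''\to f''$ locally uniformly, and letting $n\to\infty$ in $f_n(0)=0$, $f_n'(0)=1$ gives $f\in\mathcal{A}$. Each $f_n(z)/z$ is zero-free on $\mathbb{D}$ with value $1$ at the origin, and $f_n(z)/z\to f(z)/z$ locally uniformly; by Hurwitz's theorem the limit is either identically $0$ or zero-free, and the normalization forces the latter. Consequently $zf_n'(z)/f_n(z)\to zf'(z)/f(z)$ locally uniformly, with the right-hand side analytic on $\mathbb{D}$. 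To pass subordination to the limit, write $zf_n'(z)/f_n(z)=\psi(\omega_n(z))$ with Schwarz functions $\omega_n$; the family $\{\omega_n\}$ is bounded by $1$, hence normal, so along a subsequence $\omega_n\to\omega$ locally uniformly with $\omega(0)=0$ and $|\omega|\le 1$, whence $\omega$ maps $\mathbb{D}$ into $\mathbb{D}$ by the open mapping theorem. Then $zf'(z)/f(z)=\psi(\omega(z))\prec\psi(z)$, i.e. $f\in\mathcal{S}^*(\psi)$. For $\mathcal{C}(\psi)$ the same argument applies with $f_n(z)/z$ replaced by the zero-free functions $f_n'(z)$ and $zf'/f$ replaced by $1+zf''/f'$; alternatively, one uses the Alexander-type relation $f\in\mathcal{C}(\psi)\iff zf'\in\mathcal{S}^*(\psi)$ together with the fact that $f_n\to f$ locally uniformly iff $zf_n'\to zf'$ locally uniformly.

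I expect the only genuinely delicate point to be the application of Hurwitz's theorem guaranteeing that the Ma--Minda quotient does not degenerate in the limit---that the denominator stays zero-free and the limit is not the constant $0$; the normalization $f(0)=0,\ f'(0)=1$ is precisely what excludes these pathologies. The remaining steps are routine uses of Montel's and Weierstrass's theorems and the stability of subordination to a univalent majorant under locally uniform convergence. Together, normality plus closedness is exactly compactness of $\mathcal{S}^*(\psi)$ and $\mathcal{C}(\psi)$ in the topology of locally uniform convergence.
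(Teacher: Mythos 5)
Your proposal is correct, and while it follows the same overall template as the paper (normality plus closedness under locally uniform limits), the decisive step is handled by a genuinely different argument. The paper cites Montel's theorem for normality and then proves closedness by contradiction at a single point: assuming $g(z_0)=z_0f'(z_0)/f(z_0)\notin\psi(\mathbb{D})$, it takes $\epsilon=\mathrm{dist}(g(z_0),\partial\psi(\mathbb{D}))$ and argues that $g_n(z_0)\in B(g(z_0),\epsilon)$ eventually, hence $g_n(z_0)\notin\psi(\mathbb{D})$, contradicting $f_n\in\mathcal{S}^*(\psi)$. You instead pass the subordination to the limit via Schwarz functions: writing $zf_n'/f_n=\psi(\omega_n)$, extracting a locally uniform limit $\omega$ of the $\omega_n$, and checking $\omega(0)=0$, $\omega(\mathbb{D})\subseteq\mathbb{D}$, after first securing $zf_n'/f_n\to zf'/f$ through Hurwitz applied to $f_n(z)/z$ (and to $f_n'$ in the convex case, or via the Alexander relation). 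Your route buys rigor precisely where the paper's argument is thinnest: the paper's ball argument silently assumes $g(z_0)$ lies at positive distance from $\psi(\mathbb{D})$ and so does not address the delicate possibility that the limit function's values touch $\partial\psi(\mathbb{D})$, nor does it justify that the quotients $z f_n'/f_n$ converge (non-vanishing of the limit denominator) or that the limit lies in $\mathcal{S}$ beyond calling it well known; your Hurwitz and Schwarz-function steps supply exactly these justifications. The paper's argument, in exchange, is shorter and avoids normal-family extraction for the $\omega_n$; your normality proof via the growth theorem of $\mathcal{S}$ (or the Ma--Minda growth estimate) also makes explicit the local boundedness that the paper leaves to the citation of Montel. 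The one point you could still spell out is that $\psi(\omega_{n_k})\to\psi(\omega)$ locally uniformly, which follows since $\omega(K)$ is a compact subset of $\mathbb{D}$ for each compact $K$ and $\psi$ is uniformly continuous on a slightly larger compact set; this is routine and does not affect correctness.
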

	\begin{proof}
		From Montel's Theorem~\cite{goddman-1}, we see that the class $\mathcal{S}^*(\psi)$ is a normal family. Now let us prove that $\mathcal{S}^*(\psi)$ is compact.
		Let $\{f_n\}_{n\in \mathbb{N}}$ be a sequence of functions from $\mathcal{S}^*(\psi)$. Suppose that $\{f_n\}$ be convergent. Then it is well-known that 
		\begin{equation*}
		\lim_{n\rightarrow \infty} f_n := f \in \mathcal{S}.
		\end{equation*}
		We show that $f\in \mathcal{S}^*(\psi)$. If possible, suppose that there exists a nonzero point $z_0\in \mathbb{D}$ such that 
		$$g(z_0)\not \in \psi(\mathbb{D}),$$
		where $g(z)=zf'(z)/f(z)$. Note that the corresponding sequence $\{ g_n\}$ converges to $g$, where $g_n(z)=zf'_{n}(z)/f_n(z)$. Now let
		\begin{equation*}
		\epsilon = dist (g(z_0), \partial\psi(\mathbb{D})).
		\end{equation*}
		Then the open ball 
		$$B(g(z_0), \epsilon) \not\subset \psi(\mathbb{D}).$$
		Since $g_n \rightarrow g$, in particular, $g_n(z_0) \rightarrow g(z_0)$. There exists $n(\epsilon) \in \mathbb{N}$
		such that
		\begin{equation*}
		g_n(z_0) \in  B( g(z_0), \epsilon), \quad \forall n\geq n(\epsilon),
		\end{equation*}
		which implies 
		\begin{equation*}
		g_n(z_0) \not \in  \psi(\mathbb{D}), \quad \forall n\geq n(\epsilon).
		\end{equation*}
		But as $f_n \in \mathcal{S}^*(\psi)$,
		\begin{equation*}
		g_n(z_0) =\frac{z_0 f'_n(z_0)}{f_n(z_0)} \in \psi(\mathbb{D}), \quad \forall n.
		\end{equation*}
		Hence, we must have $f \in \mathcal{S}^*(\psi)$, that means the family $\mathcal{S}^*(\psi)$ is compact. With the similar arguments, it is easy to see that the family $\mathcal{C}(\psi)$ is also compact. \qed
	\end{proof}
	
	\begin{remark}{(\it Existence of sharp coefficients Bounds)}\label{variational-technique}
		Let us consider the real-valued functional $\mathcal{J}$ defined on $\mathcal{S}^*(\psi)$ as
		\begin{equation*}
		\mathcal{J}(f)= \max \{|a_n| \} \quad \text{\it for every} \quad f\in \mathcal{S}^*(\psi),
		\end{equation*}
		where $n$ is fixed and $f(z)=z+\sum_{n=2}^{\infty}a_nz^n$. From Lemma~\ref{normalcompact}, $\mathcal{S}^*(\psi)$  is normal and compact. 
		Further, since $\mathcal{S}^*(\psi) \subseteq \mathcal{S}^*$, we have $|a_n|\leq n$, that means $\mathcal{J}$ is a bounded functional. Hence, following the discussion in the Goodman's Book~\cite[page no.~44-45]{goddman-1}, we conclude that
		\begin{equation*}
		\mathcal{J}(f)= \max \{|a_n| \} \quad \text{\it exists in the family} \quad \mathcal{S}^*(\psi).
		\end{equation*} 
		Thus, let us say that
		$$\max_{f\in \mathcal{S}^*(\psi) }^{} \{|a_n| \}:= M(n)$$ 
		for each $n\in \mathbb{N}$. For instance, $M(n)=n$ for the class of univalent starlike functions. For the Janowski starlike functions, it is given by \cite[Theorem~3]{Aouf-1987}.
	\end{remark}
	
	We can now state our result in a general setting whose complement is Theorem~\ref{Gen-Mainthm2}:
	\begin{theorem}\label{Gen-Mainthm1}
		Let $\{\phi_{n}(r)  \}_{n=1}^{\infty}$ be a sequence of non-negative continuous functions in $(0,1)$ such that the series
		\begin{equation*}
		\phi_{1}(r)+\sum_{n=2}^{\infty} M(n)\phi_{n}(r)
		\end{equation*}
		converges locally uniformly with respect to each $r\in[0,1)$.
		If for $\beta\in[0,1]$
		\begin{equation}\label{Gen-condition}
		\beta f'_{0}(r^m)+ (1-\beta)f_{0}(r^m) +\sum_{n=1}^{\infty} M(n) \phi_{n}(r) < -f_{0}(-1).
		\end{equation}
		and the function $f(z)=z+\sum_{n=2}^{\infty}a_n z^n \in \mathcal{S}^*(\psi)$.
		Then the following inequality
		\begin{equation*}
		\beta |f'(z^m)| +(1-\beta)|f(z^m)| + \sum_{n=1}^{\infty}|a_n|\phi_{n}(r) \leq d(0, \partial{\Omega})
		\end{equation*}
		holds for $|z|=r\leq r_0$, where $m\in \mathbb{N}$, $\Omega=f(\mathbb{D})$ and $r_0$ is the smallest positive root of the equation:
		\begin{equation}\label{Gen-GBRequation1}
		\beta f'_{0}(r^m)+ (1-\beta)f_{0}(r^m) +\sum_{n=1}^{\infty} M(n) \phi_{n}(r) = -f_{0}(-1),
		\end{equation}
		where $M(1)=1$ and
		\begin{equation*}
		f_0(z)= z\exp\int_{0}^{z}\frac{\psi(t)-1}{t}dt.
		\end{equation*}
		In case when $f_0$ or it's rotation serves as an extremal for the coefficient's bounds $M(n)$, then the radius $r_0$ is sharp. 
	\end{theorem}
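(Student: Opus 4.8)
The plan is to deduce the asserted inequality from the classical Ma--Minda growth, distortion and covering theorems together with the unconditional coefficient bounds $|a_n|\le M(n)$, and then to control the radius $r_0$ by continuity and the intermediate value theorem. The conceptual work — that the maxima $M(n)$ actually exist — has already been done in Lemma~\ref{normalcompact} and Remark~\ref{variational-technique}, so the proof itself is essentially an assembly; the only mildly delicate points will be pinning down $r_0$ as the \emph{first} positive zero of the function behind~\eqref{Gen-GBRequation1}, and, for the sharpness statement, checking that equality is genuinely attained.

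First I would collect the ingredients. Write $f_0(z)=z\exp\int_0^z\frac{\psi(t)-1}{t}\,dt$, so that $zf_0'(z)/f_0(z)=\psi(z)$ and $f_0\in\mathcal{S}^*(\psi)$ is the Ma--Minda extremal function, having positive Taylor coefficients after a suitable rotation. For every $f\in\mathcal{S}^*(\psi)$:
\begin{enumerate}[(a)]
  \item the growth theorem of \cite{minda94} gives $|f(z)|\le f_0(|z|)$, hence $|f(z^m)|\le f_0(r^m)$ for $|z|=r$;
  \item the distortion theorem of \cite{minda94} gives $|f'(z)|\le f_0'(|z|)$, hence $|f'(z^m)|\le f_0'(r^m)$;
  \item the covering theorem of \cite{minda94} gives $\{w:|w|< -f_0(-1)\}\subseteq\Omega=f(\mathbb{D})$, hence $d(0,\partial\Omega)\ge -f_0(-1)$;
  \item by Remark~\ref{variational-technique} the numbers $M(n)=\max_{f\in\mathcal{S}^*(\psi)}|a_n|$ exist, $M(1)=1$, and $|a_n|\le M(n)$ for all $n$, so $\sum_{n\ge1}|a_n|\phi_n(r)\le\sum_{n\ge1}M(n)\phi_n(r)$, the last series converging locally uniformly by assumption.
\end{enumerate}
Note that, in contrast with Lemma~\ref{series-lem-gen}, no restriction $r\le 1/3$ is needed here, because the bound $|a_n|\le M(n)$ holds throughout $\mathbb{D}$.

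Combining (a), (b), (d) (using $\beta,\,1-\beta\ge0$), for $|z|=r$ we get
\[
\beta|f'(z^m)|+(1-\beta)|f(z^m)|+\sum_{n\ge1}|a_n|\phi_n(r)\le \beta f_0'(r^m)+(1-\beta)f_0(r^m)+\sum_{n\ge1}M(n)\phi_n(r)=:\Psi(r).
\]
Put $\Phi(r):=\Psi(r)+f_0(-1)$, a continuous function of $r$. By hypothesis~\eqref{Gen-condition} it is negative for small $r>0$, and $r_0$ is its smallest positive zero by~\eqref{Gen-GBRequation1}; the intermediate value theorem then forces $\Phi(r)\le0$ for $0<r\le r_0$, i.e. $\Psi(r)\le -f_0(-1)$, which together with (c) yields $\Psi(r)\le d(0,\partial\Omega)$. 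This is precisely the claimed inequality on $|z|=r\le r_0$.

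For sharpness, assume $f_0$ (or its rotation) is extremal for the bounds $M(n)$, that is, $M(n)=|t_n|$ where $f_0(z)=z+\sum_{n\ge2}t_nz^n$; after a rotation take $t_n\ge0$. Taking $f=f_0$ and $z=r$ turns (a), (b), (d) into equalities, $|f_0(r^m)|=f_0(r^m)$, $|f_0'(r^m)|=f_0'(r^m)$, $\sum_{n\ge1}|t_n|\phi_n(r)=\sum_{n\ge1}M(n)\phi_n(r)$, while $d(0,\partial f_0(\mathbb{D}))=-f_0(-1)$ exactly (the growth/covering bounds give $d(0,\partial f_0(\mathbb{D}))\ge -f_0(-1)$, and $f_0(-r)\to f_0(-1)\in\partial f_0(\mathbb{D})$ gives the reverse inequality). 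Hence at $r=r_0$ the left-hand side equals $-f_0(-1)=d(0,\partial\Omega)$, so $r_0$ cannot be enlarged, completing the proof.
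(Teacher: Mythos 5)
Your proposal is correct and follows essentially the same route as the paper: the coefficient bounds $|a_n|\le M(n)$ from Remark~\ref{variational-technique}, the Ma--Minda growth/distortion theorems, the covering bound $d(0,\partial\Omega)\ge -f_0(-1)$, the intermediate value theorem to locate $r_0$, and sharpness via $f=f_0$ when $f_0$ is extremal for the $M(n)$. Your treatment is, if anything, slightly more explicit than the paper's about why the defining function stays nonpositive on all of $(0,r_0]$ and about the attainment of $d(0,\partial f_0(\mathbb{D}))=-f_0(-1)$, but these are refinements of the same argument, not a different one.
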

	\begin{proof}
		From Lemma~\ref{normalcompact} and Remark~\ref{variational-technique}, we see that: (a) $\mathcal{J}$ is a bounded real valued continuous functional, (b) $\mathcal{S}^*(\psi)$ is a normal family, and (c) $\mathcal{S}^*(\psi)$ is a compact family in $\mathbb{D}$. Thus, the sharp bounds for each $a_n$ exists. In view of Remark~\ref{variational-technique}, we have
		\begin{equation*}
		|a_n| \leq M(n),
		\end{equation*}
		which yields that
		\begin{equation}\label{Gen-sum1}
		\sum_{n=1}^{\infty}|a_n|\phi_{n}(r) \leq \sum_{n=1}^{\infty}M(n)\phi_{n}(r).
		\end{equation}
		The Koebe-radius for the functions in $\mathcal{S}^*(\psi)$ satisfies
		\begin{equation}\label{keobe-inequality}
		d(0, \partial f(\mathbb{D})) \geq -f_0(-1).
		\end{equation}
		Now combining it with the growth and distortion theorems~\cite{minda94}, and using the condition~\ref{Gen-condition}, the inequalities \eqref{Gen-sum1} and \eqref{keobe-inequality} gives
		\begin{align*}
		\beta |f'(z^m)| &+(1-\beta)|f(z^m)| + \sum_{n=1}^{\infty}|a_n|\phi_{n}(r)\\
		&\leq 	\beta f'_{0}(r^m)+ (1-\beta)f_{0}(r^m) +\sum_{n=1}^{\infty} M(n) \phi_{n}(r)\\
		& \leq d(0, \partial{\Omega}),
		\end{align*}
		which holds in $|z|=r\leq r_0$, where $r_0$ is the minimal positive root of the equation~\eqref{Gen-GBRequation1}. Existence of the root $r_0$ follows from the Intermediate value theorem for continuous function in $(0,1)$.
		To see the sharpness case, let us consider the function
		\begin{equation*}
		f_0(z)= z\exp\int_{0}^{z}\frac{\psi(t)-1}{t}dt.
		\end{equation*} 
		such that it's Taylor series coefficients $a_n(f_0)$ satisfies $|a_n(f_0)|=M(n)$. For this function we have 
		\begin{equation*}
		d(0, \partial f(\mathbb{D})) = -f_0(-1),
		\end{equation*}
		and the following equality holds for $|z|=r_0$:
		\begin{equation*}
		\beta f'_{0}(r^m)+ (1-\beta)f_{0}(r^m) +\sum_{n=1}^{\infty} M(n) \phi_{n}(r) = d(0, \partial f(\mathbb{D})), 
		\end{equation*}
		and therefore, if $f_0$ is extremal for each coefficient's bound, then the radius $r_0$ can not be improved. \qed	
	\end{proof}
	
	\begin{question}
		What if we do not have $M(n)$?
	\end{question}	 
	
	For such cases, the following result complements Theorem~\ref{Gen-Mainthm1}:
	\begin{theorem}\label{Gen-Mainthm2}
		Let $\{\phi_{n}(r)  \}_{n=1}^{\infty}$ be a non-negative sequence of continuous functions in $[0,1]$ such that
		\begin{equation*}
		\phi_1(r)+ \sum_{n=2}^{\infty} \left|\frac{f^{(n)}_{0}(0)}{n!}\right| \phi_{n}(r)
		\end{equation*}
		converges locally uniformly with respect to each $r\in[0,1)$.
		If 
		\begin{equation*}
		\beta |f'(z^m)| +(1-\beta)|f(z^m)|+\phi_{1}(r) +\sum_{n=2}^{\infty} \left|\frac{f^{(n)}_{0}(0)}{n!}\right| \phi_{n}(r) < -f_{0}(-1)
		\end{equation*}
		and
		$f(z)=z+\sum_{n=2}^{\infty}a_n z^n \in \mathcal{S}^*(\psi)$.
		Then
		\begin{equation}\label{Gen-Mainthm2-expr}
		\beta |f'(z^m)| +(1-\beta)|f(z^m)| + \sum_{n=1}^{\infty}|a_n|\phi_{n}(r) \leq d(0, \partial{\Omega})
		\end{equation}
		holds for $|z|=r\leq r_b=\{ 1/3, r_0 \}$, where $m\in \mathbb{N}$, $\Omega=f(\mathbb{D})$ and $r_0$ is the smallest positive root of the equation:
		\begin{equation*}
		\beta f'_{0}(r^m)+ (1-\beta)f_{0}(r^m) +\sum_{n=2}^{\infty} \left|\frac{f^{(n)}_{0}(0)}{n!}\right| \phi_{n}(r) = -f_{0}(-1)-\phi_{1}(r),
		\end{equation*}
		where
		\begin{equation*}
		f_0(z)= z\exp\int_{0}^{z}\frac{\psi(t)-1}{t}dt.
		\end{equation*}
		Moreover, the inequality \eqref{Gen-Mainthm2-expr} also holds for the class $S_{f}(\psi)$ in $|z|=r\leq r_b$. When $r_b=r_0$, then the radius is best possible.
	\end{theorem}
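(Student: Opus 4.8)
The plan is to run the proof of Theorem~\ref{Gen-Mainthm1} essentially verbatim, but with the coefficient estimate $|a_n|\leq M(n)$ (no longer available) replaced by a bound for the whole sum $\sum_n|a_n|\phi_n(r)$ in terms of the Taylor coefficients $t_n=f_0^{(n)}(0)/n!$ of the Ma--Minda extremal function $f_0$. This substitution is paid for with the restriction $|z|\leq 1/3$, which is inherited from Lemma~\ref{series-lem-gen}.

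First I would collect the ingredients shared with Theorem~\ref{Gen-Mainthm1}. By the Ma--Minda growth and distortion theorems~\cite{minda94}, for $f\in\mathcal{S}^*(\psi)$ and $|z|=r$ one has $|f(z^m)|\leq f_0(r^m)$ and $|f'(z^m)|\leq f_0'(r^m)$ (recall that $f_0$ has real coefficients and that both $f_0$ and $f_0'$ are increasing on $(0,1)$), whence
\[
\beta|f'(z^m)|+(1-\beta)|f(z^m)|\leq \beta f_0'(r^m)+(1-\beta)f_0(r^m).
\]
Together with the Koebe-type estimate $d(0,\partial f(\mathbb{D}))\geq -f_0(-1)$, only the coefficient sum needs a new argument.

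For that I would use the subordination $f(z)/z\prec f_0(z)/z$, valid for every $f\in\mathcal{S}^*(\psi)$ (this is also the device behind \cite[Theorem~2.3]{kamal-mediter2021} quoted above; cf.~\cite{minda94}). Writing $f(z)/z=\sum_{k\geq0}a_{k+1}z^k$ and $f_0(z)/z=\sum_{k\geq0}t_{k+1}z^k$ with $a_1=t_1=1$, and applying Lemma~\ref{series-lem-gen} to this subordination with the admissible weight sequence $\nu_0(r)=\phi_1(r)$, $\nu_k(r)=\phi_{k+1}(r)$ $(k\geq1)$, I obtain, for $|z|=r\leq 1/3$,
\[
\sum_{n=1}^{\infty}|a_n|\phi_n(r)\leq \phi_1(r)+\sum_{n=2}^{\infty}|t_n|\phi_n(r).
\]
Adding this to the displayed growth/distortion bound and using $-f_0(-1)\leq d(0,\partial\Omega)$ yields, for $|z|=r\leq 1/3$,
\[
\beta|f'(z^m)|+(1-\beta)|f(z^m)|+\sum_{n=1}^{\infty}|a_n|\phi_n(r)\leq H(r),
\]
where $H(r):=\beta f_0'(r^m)+(1-\beta)f_0(r^m)+\phi_1(r)+\sum_{n\geq2}|t_n|\phi_n(r)$. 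By hypothesis $H(r)<-f_0(-1)$ for $r$ near $0$, so the intermediate value theorem gives a smallest positive root $r_0$ of $H(r)=-f_0(-1)$, and $H(r)\leq -f_0(-1)\leq d(0,\partial\Omega)$ for all $r\leq r_b:=\min\{1/3,r_0\}$, which is precisely~\eqref{Gen-Mainthm2-expr}.

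The extension to $S_f(\psi)$ is parallel: if $g=\sum_{k\geq1}b_k z^k\prec f$, then $|g(z^m)|\leq f_0(r^m)$ (with the corresponding bound for $g'$ obtained from the distortion bound for $f$ together with the Schwarz--Pick inequality), while Lemma~\ref{series-lem-gen} applied to $g\prec f$ gives $\sum_{n\geq1}|b_n|\phi_n(r)\leq\sum_{n\geq1}|a_n|\phi_n(r)$ for $r\leq1/3$, which the previous estimate bounds by $\phi_1(r)+\sum_{n\geq2}|t_n|\phi_n(r)$. For sharpness when $r_b=r_0$ I would take $f=f_0$: then $|a_n|=|t_n|$ for all $n$, $d(0,\partial f_0(\mathbb{D}))=-f_0(-1)$, and each inequality above becomes an equality at $r=r_0$, so $r_0$ cannot be improved. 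The step I expect to be the genuine obstacle is the coefficient-sum estimate, which rests on the subordination $f(z)/z\prec f_0(z)/z$ and on Lemma~\ref{series-lem-gen}; it is exactly this use of Lemma~\ref{series-lem-gen} that caps the radius at $1/3$ and explains why the bound is best possible only when $r_0\leq 1/3$. Everything else is the same routine combination of growth, distortion and Koebe estimates as in Theorem~\ref{Gen-Mainthm1}.
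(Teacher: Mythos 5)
Your proposal follows essentially the same route as the paper's proof: the subordination $f(z)/z \prec f_0(z)/z$ together with the $1/3$-radius lemma to dominate $\sum_{n}|a_n|\phi_n(r)$ by the corresponding sum over the coefficients of $f_0$, the Ma--Minda growth and distortion bounds for $\beta|f'(z^m)|+(1-\beta)|f(z^m)|$, the Koebe-type estimate $d(0,\partial\Omega)\geq -f_0(-1)$, the intermediate value theorem for $r_0$, and sharpness via $f=f_0$ when $r_0\leq 1/3$. The only immaterial difference is that you invoke Lemma~\ref{series-lem-gen} directly for the weighted sum, whereas the paper extracts the coefficient bounds $|a_n|\leq\left|f_0^{(n)}(0)/n!\right|$ from Lemma~\ref{series-lem} and then sums against $\phi_n(r)$ (the paper itself notes the former follows from the latter), and your sketch of the $S_f(\psi)$ clause is more explicit than the paper's, which merely asserts it.
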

	\begin{proof}
		Since $f\in \mathcal{S}^*(\psi) $, it is known that
		\begin{equation*}
		\frac{f(z)}{z} \prec \frac{f_0(z)}{z}.
		\end{equation*}
		Applying Lemma~\ref{series-lem}, we see that
		\begin{equation*}
		\sum_{n=N}^{m} |a_n| |z|^n \leq \sum_{k=N}^{m} \left| \frac{f^k_{0}(0)}{k!}\right| |z|^k \quad \text{for}\quad |z|=r\leq \frac{1}{3}.
		\end{equation*}
		Now choosing $N=m$, we conclude that
		\begin{equation*}
		|a_n| \leq \left|\frac{f^n_{0}(0)}{n!} \right|
		\end{equation*}
		holds for each $n$ with in the disk $|z|=r\leq 1/3$.
		Hence, it suffices to see
		\begin{align*}
		&\beta |f'(z^m)| +(1-\beta)|f(z^m)| + \sum_{n=1}^{\infty}|a_n|\phi_{n}(r)\\
		\leq&\quad \beta |f'_{0}(z^m)| +(1-\beta)|f_{0}(z^m)|+\phi_{1}(r) +\sum_{n=2}^{\infty} \left|\frac{f^{(n)}_{0}(0)}{n!}\right| \phi_{n}(r)\\
		\leq &\quad -f_{0}(-1)\\
		\leq & \quad  d(0, \partial{\Omega}),
		\end{align*}
		holds in $|z|=r\leq \{r_0, 1/3\}$. If $r_0\leq 1/3$, then equatity case can be seen for the function $f=f_0$, whenever Taylor coefficients of $\psi$ are postive. \qed
	\end{proof}
	\begin{remark}
		\begin{enumerate}
			\item By taking $\phi_n(r)=r^n$ in Theorem~\ref{Gen-Mainthm2} give \cite[Theorem~5.1]{gangania-bohr}, and \cite[Theorem~3.1]{hamada-2021} for the choice $g=f$ with Taylor coefficients of $\psi$ being postive.
			\item Taking $\phi_n=r^n$ for $n\geq N$ and $0$ elsewhere in Theorem~\ref{Gen-Mainthm2} yields \cite[Theorem~5, Corollary~3]{kamal-mediter2021}.
		\end{enumerate}
	\end{remark}

	Let us discuss the generalized Bohr-Rogosinski phenomenon for the celebrated Janowski class of univalent starlike functions. For simplicity, write $\mathcal{S}^*((1+Dz)/(1+Ez))\equiv \mathcal{S}[D,E] $, where $-1\leq E<D\leq1$.
	
	\begin{corollary}\label{Jan-GBR}
		Let $\{\phi_{n}(r)  \}_{n=1}^{\infty}$ be a sequence of non-negative continuous functions in $(0,1)$ such that the series
		\begin{equation*}
		\phi_{1}(r)+\sum_{n=2}^{\infty} \prod_{k=0}^{n-2}\frac{|E-D+Ek|}{k+1}\phi_{n}(r)
		\end{equation*}
		converges locally uniformly with respect to each $r\in[0,1)$.
		If for $\beta\in[0,1]$
		\begin{equation}
		\beta f'_{0}(r^m)+ (1-\beta)f_{0}(r^m) +\sum_{n=1}^{\infty}|a_n(f_{0})|\phi_{n}(r) < -f_{0}(-1).
		\end{equation}
		and the function $f(z)=z+\sum_{n=2}^{\infty}a_n z^n \in \mathcal{S}[D,E]$.
		Then the following sharp inequality
		\begin{equation}\label{J-GBR-inequality}
		\beta |f'(z^m)| +(1-\beta)|f(z^m)| + \sum_{n=1}^{\infty}|a_n|\phi_{n}(r) \leq d(0, \partial{\Omega})
		\end{equation}
		holds for $|z|=r\leq r_0$, where $m\in \mathbb{N}$, $\Omega=f(\mathbb{D})$ and $r_0$ is the minimal positive root of the equations:\\
		If $E\neq0$
		\begin{align*}
		&\beta(1+Dr^m)(1+Er^m)^{\frac{D-2E}{E}} +(1-\beta)r^m(1+Er^m)^{\frac{D-E}{E}}\\
		&+\sum_{n=2}^{\infty} \prod_{k=0}^{n-2}\frac{|E-D+Ek|}{k+1}\phi_{n}(r) =(1-E)^{\frac{D-E}{E}} -\phi_{1}(r),
		\end{align*}
		and if $E=0$
		\begin{align*}
		e^{Dr^m}(\beta+(1-\beta(1-D))r^m)
		+\sum_{n=2}^{\infty} \prod_{k=0}^{n-2}\frac{D}{k+1}\phi_{n}(r) =e^{-D} -\phi_{1}(r),
		\end{align*}
		where
		\begin{equation}\label{Jan-exremal}
		f_0(z)= 
		\left\{
		\begin{array}
		{lr}
		z(1+E z)^{\frac{D-E}{E}}, & E\neq0; \\
		ze^{Dz},   & E=0.
		\end{array}
		\right.
		\end{equation}
		The radius $r_0$ can not be improved.
	\end{corollary}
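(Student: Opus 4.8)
The plan is to obtain this corollary as the specialization of Theorem~\ref{Gen-Mainthm1} to $\psi(z)=(1+Dz)/(1+Ez)$, and then to make the abstract data of that theorem explicit. First I would compute the extremal function $f_0$. Since $\psi(t)-1=(D-E)t/(1+Et)$, we have $(\psi(t)-1)/t=(D-E)/(1+Et)$, so that $\int_0^z(\psi(t)-1)\,dt/t$ equals $\tfrac{D-E}{E}\log(1+Ez)$ when $E\neq0$ and $Dz$ when $E=0$; exponentiating and multiplying by $z$ gives exactly $f_0$ as in \eqref{Jan-exremal}. Expanding $(1+Ez)^{(D-E)/E}$ by the binomial series (respectively $e^{Dz}$ by the exponential series) then yields $a_n(f_0)=\tfrac{1}{(n-1)!}\prod_{k=0}^{n-2}(D-E-kE)$ (respectively $D^{n-1}/(n-1)!$), hence $|a_n(f_0)|=\prod_{k=0}^{n-2}\tfrac{|E-D+Ek|}{k+1}$, which is precisely the product appearing in the statement; note also $a_1(f_0)=1=M(1)$.

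Next I would supply the sharp coefficient bound. Since $\mathcal{S}[D,E]\subseteq\mathcal{S}^*$, Lemma~\ref{normalcompact} and Remark~\ref{variational-technique} guarantee that $M(n)=\max_{f\in\mathcal{S}[D,E]}|a_n|$ exists, and the classical Janowski estimate \cite[Theorem~3]{Aouf-1987} identifies it as $M(n)=|a_n(f_0)|$, with $f_0$ (or a rotation of it) as a common extremal for every $n$. With $M(n)$ thus pinned down, Theorem~\ref{Gen-Mainthm1} applies verbatim: under the stated local-uniform convergence and the growth-type condition, the inequality \eqref{J-GBR-inequality} holds for $|z|=r\leq r_0$ where $r_0$ is the least positive root of \eqref{Gen-GBRequation1}, and $r_0$ is best possible because $f_0$ extremizes all the $M(n)$ simultaneously.

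It then only remains to rewrite equation \eqref{Gen-GBRequation1} in closed form. A short differentiation gives $f_0'(z)=(1+Dz)(1+Ez)^{(D-2E)/E}$ for $E\neq0$ (the single power collecting both terms after the cancellation of $Ez$) and $f_0'(z)=(1+Dz)e^{Dz}$ for $E=0$; evaluating $f_0$ and $f_0'$ at $z=r^m$, weighting by $\beta$ and $1-\beta$, and the elementary regrouping $\beta+(1-\beta(1-D))r^m$ in the case $E=0$, produce the displayed left-hand sides. Finally $-f_0(-1)=(1-E)^{(D-E)/E}$ when $E\neq0$ and $e^{-D}$ when $E=0$, and transferring the $n=1$ term $M(1)\phi_1(r)=\phi_1(r)$ to the right-hand side of \eqref{Gen-GBRequation1} gives exactly the two equations in the statement. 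Every step here is routine; the only non-elementary input is the sharp Janowski coefficient bound, which is classical, so I do not anticipate a genuine obstacle — the corollary is a fully worked-out instance of Theorem~\ref{Gen-Mainthm1}.
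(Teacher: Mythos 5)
Your proposal is correct and follows exactly the paper's route: the paper likewise invokes the Janowski coefficient bound of Aouf (\cite[Theorem~3]{Aouf-1987}) with $f_0$ of \eqref{Jan-exremal} as the extremal, and then applies Theorem~\ref{Gen-Mainthm1}; you have merely written out the routine computations of $f_0$, $f_0'$, $-f_0(-1)$ and the regrouping of the $\phi_1$ term that the paper leaves implicit.
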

	\begin{proof}
		Let $f(z)=z+\sum_{n=2}^{\infty}a_n z^n \in \mathcal{S}[D,E]$. Then for $n\geq2$, \cite[Theorem~3]{Aouf-1987} states that:
		\begin{equation*}
		|a_n| \leq \prod_{k=0}^{n-2}\frac{|E-D+Ek|}{k+1}=M(n),
		\end{equation*}	
		where the function $f_0$ given by \eqref{Jan-exremal} gives equality. The result now follows by Theorem~\ref{Gen-Mainthm1}. \qed

	\end{proof}
	
	\begin{remark}
		Taking $m\rightarrow \infty$ and $\beta=0$ in Corollary~\ref{Jan-GBR} yields: If 
		\begin{equation*}
		\sum_{n=1}^{\infty}|a_n(f_{0})|\phi_{n}(r) < -f_{0}(-1).
		\end{equation*}
		Then the sharp inequality
		\begin{equation*}
		\sum_{n=1}^{\infty}|a_n|\phi_{n}(r) \leq d(0, \partial{\Omega})
		\end{equation*}
		holds for $|z|=r\leq r_0$, where $m\in \mathbb{N}$, $\Omega=f(\mathbb{D})$ and $r_0$ is the minimal positive root of the equations:\\
		If $E\neq0$
		\begin{align*}
		\sum_{n=2}^{\infty} \prod_{k=0}^{n-2}\frac{|E-D+Ek|}{k+1}\phi_{n}(r) =(1-E)^{\frac{D-E}{E}} -\phi_{1}(r),
		\end{align*}
		and if $E=0$
		\begin{align*}
		\sum_{n=2}^{\infty} \prod_{k=0}^{n-2}\frac{D}{k+1}\phi_{n}(r) =e^{-D} -\phi_{1}(r),
		\end{align*}
		where $f_0$ is given in \eqref{Jan-exremal}.	 
	\end{remark}
	
	In Corollary~\ref{Jan-GBR}, putting $D=1-2\alpha$ and $E=-1$, where $0\leq\alpha<1$, we get the result for the class of univalent starlike functions of order $\alpha$, that is, $\mathcal{S}^{*}(\alpha)$:
	\begin{corollary}\label{starlikealpha-GBR}
		Let $\{\phi_{n}(r)  \}_{n=1}^{\infty}$ be a sequence of non-negative continuous functions in $(0,1)$ such that the series
		\begin{equation*}
		\phi_{1}(r)+\sum_{n=2}^{\infty} \prod_{k=0}^{n-2}\frac{k+2(1-\alpha)}{k+1}\phi_{n}(r)
		\end{equation*}
		converges locally uniformly with respect to each $r\in[0,1)$.
		If for $\beta\in[0,1]$
		\begin{align*}
		\frac{\beta(1+(1-2\alpha)r^m)}{(1-r^m)^{2(1-\alpha)+1}} &+\frac{(1-\beta)r^m}{(1-r^m)^{2(1-\alpha)}}
		+\sum_{n=2}^{\infty} \prod_{k=0}^{n-2}\frac{k+2(1-\alpha)}{k+1}\phi_{n}(r)\\ &<\frac{1}{4^{1-\alpha}} -\phi_{1}(r).
		\end{align*}
		and $f(z)=z+\sum_{n=2}^{\infty}a_n z^n \in \mathcal{S}^{*}(\alpha)$. Then the sharp inequality \eqref{J-GBR-inequality} holds for $|z|=r\leq r_0$, where $m\in \mathbb{N}$, $\Omega=f(\mathbb{D})$ and $r_0$ is the minimal positive root of the equations:
		\begin{align*}
		\frac{\beta(1+(1-2\alpha)r^m)}{(1-r^m)^{2(1-\alpha)+1}} &+\frac{(1-\beta)r^m}{(1-r^m)^{2(1-\alpha)}}
		+\sum_{n=2}^{\infty} \prod_{k=0}^{n-2}\frac{k+2(1-\alpha)}{k+1}\phi_{n}(r)\\ &=\frac{1}{4^{1-\alpha}} -\phi_{1}(r).
		\end{align*}
		The radius $r_0$ is sharp.
	\end{corollary}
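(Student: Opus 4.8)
The plan is to obtain Corollary~\ref{starlikealpha-GBR} as an immediate specialization of Corollary~\ref{Jan-GBR}, taking $D=1-2\alpha$ and $E=-1$ with $0\le\alpha<1$. First I would record that with this choice $(1+Dz)/(1+Ez)=(1+(1-2\alpha)z)/(1-z)$ maps $\mathbb{D}$ onto the half-plane $\{w:\RE w>\alpha\}$, so that $\mathcal{S}[1-2\alpha,-1]$ coincides with the familiar class $\mathcal{S}^*(\alpha)$ of starlike functions of order $\alpha$; hence any $f(z)=z+\sum_{n=2}^{\infty}a_n z^n\in\mathcal{S}^*(\alpha)$ is admissible in Corollary~\ref{Jan-GBR} with these parameters.

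Then I would perform the substitutions term by term. For the coefficient bound, since $k+2(1-\alpha)>0$ for every $k\ge0$ one has $|E-D+Ek|=|-1-(1-2\alpha)-k|=k+2(1-\alpha)$, so that $M(n)=\prod_{k=0}^{n-2}\frac{k+2(1-\alpha)}{k+1}$, which is exactly the quantity appearing in the convergence hypothesis of the corollary. For the extremal function, \eqref{Jan-exremal} gives $f_0(z)=z(1+Ez)^{(D-E)/E}=z(1-z)^{-2(1-\alpha)}=z/(1-z)^{2(1-\alpha)}$; a short differentiation yields $f_0'(z)=(1-z)^{-2(1-\alpha)-1}\bigl(1+(1-2\alpha)z\bigr)$, whence $f_0(r^m)=r^m/(1-r^m)^{2(1-\alpha)}$ and $f_0'(r^m)=(1+(1-2\alpha)r^m)/(1-r^m)^{2(1-\alpha)+1}$, matching the first two summands in both displays of the corollary. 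Finally $-f_0(-1)=-(-1)\,2^{-2(1-\alpha)}=2^{-2(1-\alpha)}=1/4^{1-\alpha}$, and moving the $n=1$ term (with $M(1)=1$, i.e.\ $\phi_1(r)$) to the right-hand side reproduces both the stated governing condition and the stated defining equation for $r_0$.

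With these identifications the governing inequality and the root equation of Corollary~\ref{Jan-GBR} become verbatim those of Corollary~\ref{starlikealpha-GBR}, so the sharp inequality \eqref{J-GBR-inequality} transfers directly. Sharpness is inherited from Theorem~\ref{Gen-Mainthm1}: the Koebe-type map $f_0(z)=z/(1-z)^{2(1-\alpha)}$ has positive Taylor coefficients equal to $M(n)$ for every $n$ and satisfies $d(0,\partial f_0(\mathbb{D}))=1/4^{1-\alpha}$, hence is simultaneously extremal for all the coefficient bounds, which is precisely what the sharpness clause requires. No step here is genuinely difficult; the only points needing a little care are the absolute-value evaluation $|E-D+Ek|=k+2(1-\alpha)$ and the simplification of the exponent $(D-E)/E=-2(1-\alpha)$, after which the argument is entirely mechanical.
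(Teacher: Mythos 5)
Your proposal is correct and follows exactly the route the paper takes: the paper derives Corollary~\ref{starlikealpha-GBR} by the same substitution $D=1-2\alpha$, $E=-1$ in Corollary~\ref{Jan-GBR}, and your computations of $M(n)$, $f_0$, $f_0'$, and $-f_0(-1)=1/4^{1-\alpha}$ (together with the sharpness via the extremal $z/(1-z)^{2(1-\alpha)}$) are accurate.
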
	
	
	Putting $\alpha=0$ in Corollary~\ref{starlikealpha-GBR}, we get the following:
	\begin{corollary}\label{starlike-GBR}
		Let the sequence $\{\phi_{n}(r)  \}_{n=1}^{\infty}$ satisfy the hypothesis of Corollary~\ref{starlikealpha-GBR} with $\alpha=0$. 
		If $f(z)=z+\sum_{n=2}^{\infty}a_n z^n \in \mathcal{S}^{*}$. Then the inequality \eqref{J-GBR-inequality} holds for $|z|=r\leq r_0$, where $m, N\in \mathbb{N}$, $\Omega=f(\mathbb{D})$ and $r_0$ is the smallest positive root of the equations:
		\begin{align*}
		\frac{\beta(1+r^m)}{(1-r^m)^{3}} +\frac{(1-\beta)r^m}{(1-r^m)^{2}}
		+\sum_{n=1}^{\infty}n\phi_{n}(r)=\frac{1}{4}.
		\end{align*}
		The radius $r_0$ is sharp.
	\end{corollary}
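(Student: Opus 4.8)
The plan is to derive Corollary~\ref{starlike-GBR} simply as the special case $\alpha=0$ of Corollary~\ref{starlikealpha-GBR}, so the entire task reduces to checking that each ingredient of that corollary collapses to the form asserted here. First I would recall that $\mathcal{S}^{*}(0)=\mathcal{S}^{*}$ and that the standing hypothesis on the sequence $\{\phi_{n}(r)\}_{n=1}^{\infty}$ is, by assumption, exactly the one imposed in Corollary~\ref{starlikealpha-GBR} evaluated at $\alpha=0$; hence the convergence requirement needs no re-verification and one may proceed directly to specialization.

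Next I would simplify the coefficient functionals. Putting $\alpha=0$, the product $\prod_{k=0}^{n-2}\frac{k+2(1-\alpha)}{k+1}$ becomes $\prod_{k=0}^{n-2}\frac{k+2}{k+1}$, which telescopes to $n$; combined with $M(1)=1$ from Theorem~\ref{Gen-Mainthm1}, this recovers the classical bound $|a_n|\le n$ for $\mathcal{S}^{*}$ and, crucially, lets one absorb the $n=1$ term so that the relevant series is written uniformly as $\sum_{n=1}^{\infty} n\,\phi_{n}(r)$.

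For the remaining pieces, setting $D=1$, $E=-1$ (equivalently $\alpha=0$) in \eqref{Jan-exremal} gives the Koebe function $f_{0}(z)=z/(1-z)^{2}$, whence $-f_{0}(-1)=1/4$, while $f_{0}(r^{m})=r^{m}/(1-r^{m})^{2}$ and $f_{0}'(r^{m})=(1+r^{m})/(1-r^{m})^{3}$. Substituting these, along with $4^{1-\alpha}=4$ and the exponent simplifications $2(1-\alpha)+1=3$, $2(1-\alpha)=2$, into the growth/distortion side of Corollary~\ref{starlikealpha-GBR} yields both the sufficient condition and the defining equation for $r_{0}$ exactly as stated. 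Applying Corollary~\ref{starlikealpha-GBR} then gives the inequality \eqref{J-GBR-inequality} for $|z|=r\le r_{0}$.

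Finally, sharpness is inherited verbatim: the Koebe function is extremal for every bound $|a_n|\le n$ and attains equality simultaneously in the growth, distortion and Koebe estimates, so the sharpness argument already carried out for Theorem~\ref{Gen-Mainthm1} (and inherited by Corollary~\ref{starlikealpha-GBR}) applies without change. There is no genuine obstacle here beyond the telescoping identity; the only point requiring a moment's care is confirming that the $n=1$ contribution is correctly folded in, so that the series in the final equation legitimately runs from $n=1$ rather than from $n=2$.
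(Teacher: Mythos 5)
Your proposal is correct and is essentially the paper's own argument: the paper gives no separate proof of Corollary~\ref{starlike-GBR}, obtaining it verbatim by putting $\alpha=0$ in Corollary~\ref{starlikealpha-GBR}, exactly as you do. Your explicit verification of the telescoping identity $\prod_{k=0}^{n-2}\frac{k+2}{k+1}=n$, the Koebe data $f_0(z)=z/(1-z)^2$, $-f_0(-1)=1/4$, and the absorption of the $\phi_1(r)$ term into $\sum_{n=1}^{\infty}n\phi_n(r)$ just spells out the routine computations the paper leaves implicit.
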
	
	
	The following series of examples discuss the choices of sequence $\phi_{n}(r)$:
	\begin{example}
		Let us consider $\phi_{n}(r)=0$ for $1\leq n\leq N$, and $\phi_{n}(r)=r^n$ for $n\geq N$ in Corollary~\ref{Jan-GBR}. Then the following sharp inequality
		\begin{equation*}
		\beta |f'(z^m)| +(1-\beta)|f(z^m)| + \sum_{n=N}^{\infty}|a_n|r^n \leq d(0, \partial{\Omega})
		\end{equation*}
		holds for $|z|=r\leq R_{m,\beta,N}$, where $m, N\in \mathbb{N}$, $\Omega=f(\mathbb{D})$ and $R_{m,\beta,N}$ is the unique positive root of the equations:\\
		If $E\neq0$
		\begin{align*}
		&\beta(1+Dr^m)(1+Er^m)^{\frac{D-2E}{E}} +(1-\beta)r^m(1+Er^m)^{\frac{D-E}{E}}\\
		&+\sum_{n=N}^{\infty} \prod_{k=0}^{N-2}\frac{|E-D+Ek|}{k+1} r^n =(1-E)^{\frac{D-E}{E}},
		\end{align*}
		and if $E=0$
		\begin{align*}
		e^{Dr^m}(\beta+(1-\beta(1-D))r^m)
		+\sum_{n=2}^{\infty} \prod_{k=0}^{n-2}\frac{D}{k+1}\phi_{n}(r) =e^{-D}.
		\end{align*} 
	\end{example}
	
	\begin{example}
		Taking $\phi_{2n-1}(r)=0$ and $\phi_{2n}(r)=r^{2n}$ in Corollary~\ref{starlikealpha-GBR} yields
		\begin{equation*}
		\beta |f'(z^m)| +(1-\beta)|f(z^m)| + \sum_{n=1}^{\infty}|a_{2n}|r^{2n} \leq d(0, \partial{\Omega})
		\end{equation*}
		which holds for $|z|=r\leq R_{m,\beta,\alpha}$, where $m \in \mathbb{N}$, $\beta[0,1]$, $\Omega=f(\mathbb{D})$ and $R_{m,\beta,\alpha}$ is the unique positive root of the equations:
		\begin{align*}
		\frac{\beta(1+(1-2\alpha)r^m)}{(1-r^m)^{2(1-\alpha)+1}} &+\frac{(1-\beta)r^m}{(1-r^m)^{2(1-\alpha)}}
		+\sum_{n=1}^{\infty} \prod_{k=0}^{2(n-1)}\frac{k+2(1-\alpha)}{k+1}r^{2n}\\ &=\frac{1}{4^{1-\alpha}}.
		\end{align*}
		The radius is sharp.
	\end{example}
	
	\begin{example}
		Letting $\phi_{2n}(r)=0$ and $\phi_{2n-1}(r)=r^{2n-1}$ in Corollary~\ref{starlike-GBR} gives the following sharp inequality
		\begin{equation*}
		\beta |f'(z^m)| +(1-\beta)|f(z^m)| + \sum_{n=1}^{\infty}|a_{2n-1}|r^{2n-1} \leq d(0, \partial{\Omega})
		\end{equation*}
		in $|z|=r\leq R_{m,\beta}$ where $m \in \mathbb{N}$, $\beta[0,1]$, $\Omega=f(\mathbb{D})$ and $R_{m,\beta}$ is the unique positive root of the equations:
		\begin{align*}
		\frac{\beta(1+r^m)}{(1-r^m)^{2+1}} +\frac{(1-\beta)r^m}{(1-r^m)^{2}}
		+\frac{r(1+r^2)}{(1-r^2)^2}=\frac{1}{4}.
		\end{align*}
		The radius is sharp.
	\end{example}

	\section{Bohr-Rogosinski sum for starlike and convex functions with respect to conjugate and symmetric points}\label{sec-2Gen}

	\begin{definition}\label{BR-S-def0}
		Let us consider the subclass of close-to-convex univalent functions given by
		\begin{equation*}
		\mathcal{K}_{s}(\psi)= \left\{f\in \mathcal{A}: -\frac{z^2f'(z)}{h(z)h(-z)} \prec \psi(z)  \right\}
		\end{equation*}
		for some $h\in \mathcal{S}^*(1/2)$.
	\end{definition}
	\begin{theorem}\label{BR-S}
		Let $f\in \mathcal{K}_{s}(\psi)$ and $\Omega=f(\mathbb{D})$. If $g\in S_{f}(\psi)$. Then 
		\begin{equation}\label{BR-sf-Ineq}
		|g(z^m)| + \sum_{k=N}^{\infty}|b_k||z|^k \leq d(0, \partial{\Omega})
		\end{equation}
		holds for $|z|=r_b \leq \min \{ \frac{1}{3}, r_0 \}$, where $m, N\in \mathbb{N}$ and $r_0$ is the minimal positive root of the equation:
		\begin{equation}\label{BR-S-root1}
		\int_{0}^{r^m} \frac{\psi(t)}{1-t^2} + R^N(r)=\int_{0}^{1}\dfrac{\psi(-t)}{1+t^2} dt,
		\end{equation}
		where
		\begin{equation*}
		R^N(r)= \int_{0}^{r} \frac{M_{t}^{N}(\psi) t^{2N}}{t^2(1-t^2)}dt \quad \text{and} \quad f_0(z)=\int_{0}^{z} \frac{\psi(t)}{1-t^2}dt.
		\end{equation*}
	\end{theorem}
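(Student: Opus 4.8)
The plan is to split the left side of \eqref{BR-sf-Ineq} into an estimate for $|g(z^m)|$ and one for the tail $\sum_{k\ge N}|b_k||z|^k$, each obtained from the subordination $g\prec f$ together with the structure of $\mathcal{K}_{s}(\psi)$, and then to weigh the sum against a lower bound for $d(0,\partial\Omega)$.

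First I would recast the class. Put $H(z):=-h(z)h(-z)/z$ with $h\in\mathcal{S}^*(1/2)$. A short computation with logarithmic derivatives gives $\RE\bigl(zH'(z)/H(z)\bigr)>0$, so $H$ is starlike, and $H$ is odd with $H(z)=z+\cdots$; the normalization $h(z)=z/(1-z)$ produces $H(z)=z/(1-z^2)$ and the Ma--Minda extremal $f_0(z)=\int_0^z\psi(t)/(1-t^2)\,dt$. Thus $f\in\mathcal{K}_{s}(\psi)$ means $zf'(z)/H(z)\prec\psi$, i.e.\ $f'(z)=\bigl(H(z)/z\bigr)\psi(w(z))$ for a Schwarz function $w$. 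Using the Koebe-type growth estimates for the odd starlike $H$, namely $|z|/(1+|z|^2)\le|H(z)|\le|z|/(1-|z|^2)$, together with $|\psi(w(z))|\le\psi(|z|)$ and $\RE\psi(w(z))\ge\psi(-|z|)$, I would next establish the growth and distortion theorems for $\mathcal{K}_{s}(\psi)$ (cf.\ \cite{ChoKwonRavi-2011,WangGaoYuan-2006}): integrating $f'$ along a radius yields $|f(z)|\le\int_0^{|z|}\psi(t)/(1-t^2)\,dt$, while the minimum-modulus argument for close-to-convex-type functions (controlling the argument of $H(z)/z$ along the ray of least modulus) gives $|f(z)|\ge\int_0^{|z|}\psi(-t)/(1+t^2)\,dt$; letting $|z|\to 1$, $d(0,\partial\Omega)\ge\int_0^1\psi(-t)/(1+t^2)\,dt$.

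Now since $g\prec f$, write $g=f\circ\omega$ with $\omega$ a Schwarz function; then $|\omega(z^m)|\le r^m$, so the growth theorem gives $|g(z^m)|\le\int_0^{r^m}\psi(t)/(1-t^2)\,dt$. For the coefficients, Lemma~\ref{series-lem} applied to $g\prec f$ yields $\sum_{k\ge N}|b_k|r^k\le\sum_{k\ge N}|a_k|r^k$ for $r\le 1/3$. The remaining, and most delicate, step is to bound $\sum_{k\ge N}|a_k|r^k$ by $R^N(r)$: starting from $f'(z)=(H(z)/z)\psi(w(z))$ and the identity $\sum_{k\ge N}|a_k|r^k=\int_0^r M^{N-1}_\rho(f')\,d\rho$, I would insert the coefficient estimates for the odd starlike factor $H(z)/z$ and apply Lemma~\ref{series-lem} to $\psi(w(z))\prec\psi(z)$ to control the Taylor coefficients of $\psi\circ w$ inside $|z|\le 1/3$, and then integrate term by term, identifying the resulting series with $R^N(r)$.

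Finally, for $|z|=r\le\min\{1/3,r_0\}$ one combines the two estimates:
\begin{equation*}
|g(z^m)|+\sum_{k\ge N}|b_k|r^k\le\int_0^{r^m}\frac{\psi(t)}{1-t^2}\,dt+R^N(r)\le\int_0^{1}\frac{\psi(-t)}{1+t^2}\,dt\le d(0,\partial\Omega),
\end{equation*}
where the middle inequality holds because $r\mapsto\int_0^{r^m}\psi(t)/(1-t^2)\,dt+R^N(r)$ is continuous, strictly increasing, vanishes at $r=0$, and (since $\psi(t)/(1-t^2)>\psi(-t)/(1+t^2)$ on $(0,1)$) exceeds $\int_0^1\psi(-t)/(1+t^2)\,dt$ for $r$ near $1$, so by the intermediate value theorem it equals that value at a unique minimal $r_0\in(0,1)$; the restriction $r\le 1/3$ is needed only for the coefficient comparison. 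The two hard points are the sharp lower-growth (hence distance) estimate for $\mathcal{K}_{s}(\psi)$, which rests on the careful path/rotation argument typical of close-to-convex functions, and the exact tail bound $\sum_{k\ge N}|b_k|r^k\le R^N(r)$, where the interaction between the odd starlike factor $H$ and the subordinant $\psi$ has to be tracked precisely.
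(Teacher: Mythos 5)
Your proposal follows essentially the same route as the paper's proof: you factor the class condition as $zf'(z)=G(z)\psi(\omega(z))$ with $G(z)=-h(z)h(-z)/z$ odd starlike, bound the coefficient tail by combining $|h_{2n-1}|\le 1$ with Lemma~\ref{series-lem} applied to $\psi\circ\omega\prec\psi$ and integrating (which is exactly the paper's use of the truncated Bohr operator $M_r^N$ and its submultiplicativity to arrive at $R^N(r)$), estimate $|g(z^m)|$ by the growth theorem for $\mathcal{K}_s(\psi)$ from \cite{ChoKwonRavi-2011}, bound $d(0,\partial\Omega)$ from below by $\int_0^1\psi(-t)/(1+t^2)\,dt$, and finish with the intermediate value theorem for $r_0$. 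The only differences are cosmetic (working with $f'=(H(z)/z)\psi(w)$ instead of $zf'$, and deriving rather than citing the growth/distance bounds), so the attempt matches the paper's argument.
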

	\begin{proof}
		Let $g(z)=\sum_{k=1}^{\infty} b_k z^k \prec f(z)$. Then by Lemma~\ref{series-lem}, for $r\leq 1/3$
		\begin{equation}\label{BR-S-proof0}
		\sum_{k=N}^{\infty}|b_k| r^k  \leq \sum_{n=N}^{\infty} |a_n|r^n = \sum_{n=N}^{\infty} \frac{|\tilde{b}_n|}{n} r^n,
		\end{equation}
		where $\tilde{b}_n$ are the power series coefficient of $\tilde{G}(z)$ defined below. From Definition~\ref{BR-S-def0}, we have
		\begin{equation*}
		zf'(z)= G(z) \psi(\omega(z))=: \tilde{G}(z),
		\end{equation*}
		where
		\begin{equation*}
		G(z)= \frac{-h(z) h(-z)}{z} =: z+ \sum_{n=2}^{\infty} h_{2n-1} z^{2n-1},
		\end{equation*}
		which is an odd starlike function. Now a simple integration gives that
		\begin{equation*}
		f(z)=\int_{0}^{z} \frac{G(t) \psi(\omega(t))}{t} dt.
		\end{equation*}
		For $f\in \mathcal{A}$, let us consider the operator
		\begin{equation*}
		M_{r}^{N}(f) = \sum_{n=N}^{\infty} |a_n| |z^n| = \sum_{n=N}^{\infty} |a_n| r^n.
		\end{equation*}
		Then 
		\begin{equation}\label{BR-S-proof1}
		M_{r}^{N}( \tilde{G}) \leq M_{r}^{N}( G). M_{r}^{N}( \psi\circ \omega).
		\end{equation}
		Since $\psi\circ \omega \prec \psi$,  we get
		\begin{equation}\label{BR-S-proof2}
		M_{r}^{N}( \psi\circ \omega) \leq M_{r}^{N}( \psi), \quad\text{for}\quad r\leq 1/3.
		\end{equation}
		It is known that odd starlike functions statisfies $|h_{2n-1}|\leq 1$. Thus
		\begin{align}\label{BR-S-proof3}
		M_{r}^{N}( G) \leq \sum_{n=N}^{\infty} r^{2n-1}= \frac{1}{r} \left( \frac{r^{2N}}{1-r^2} \right).
		\end{align}
		Now combining the inequalities \eqref{BR-S-proof0}, \eqref{BR-S-proof1}, \eqref{BR-S-proof2} and \eqref{BR-S-proof3}, the following sequence of inequalities holds  for $r\leq 1/3$:
		\begin{align}
		\sum_{k=N}^{\infty}|b_k| r^k  &\leq  \sum_{n=N}^{\infty} \frac{|\tilde{b}_n|}{n} r^n 
		= \int_{0}^{r}\frac{M_{t}^{N}( \tilde{G})}{t}dt \nonumber\\
		& \leq \int_{0}^{r}\frac{M_{t}^{N}( G). M_{t}^{N}( \psi\circ \omega)}{t}dt  \nonumber\\
		& \leq \int_{0}^{r} \frac{M_{t}^{N}(\psi) t^{2N}}{t^2(1-t^2)}dt
		:= R^N(r). \label{BR-S-proof4}
		\end{align}
		Since, also see growth theorem in \cite[Theorem~2, page no.~4]{ChoKwonRavi-2011},
		\begin{align*}
		|g(z)|= |f(\omega(z))| \leq \max_{|z|=r} |f(|z|\leq r)|
		\leq \int_{0}^{r} \frac{\psi(t)}{1-t^2}dt
		= f_0(r) ,
		\end{align*}
		it follows that
		\begin{equation}\label{BR-S-proof5}
		|g(z^m)| \leq f(r^m) \leq \hat{f}_0(r^m), \quad \text{where}\quad f_0(z)=\int_{0}^{z} \frac{\psi(t)}{1-t^2}dt.
		\end{equation} 
		Finally, note that
		\begin{equation*}
		d(0, \Omega) \geq \int_{0}^{1}\frac{\psi(-t)}{1+t^2} dt.
		\end{equation*}
		Hence, from \eqref{BR-S-proof4} and \eqref{BR-S-proof5}, we get
		\begin{equation*}
		|g(z^m)| + \sum_{k=N}^{\infty}|b_k||z|^k \leq d(0, \partial{\Omega})
		\end{equation*}
		for $|z|=r\leq r_b= \{1/3, r_0\}$, where  $r_0 \in (0,1)$ is the minimal root of the equation~\eqref{BR-S-root1}. Existence of the root follows by Intermediate value theorem in the interval $[0,1]$.  \qed
	\end{proof}
	
	\begin{remark}
		Taking $m\rightarrow \infty$ and $N=1$, then Theorem~\ref{BR-S} reduces to \cite[Theorem~2.2]{AlluHalder-2021cc}.	
	\end{remark}	
	
	\begin{corollary}
		If $f\in \mathcal{K}_{s}(\psi)$ and $\Omega=f(\mathbb{D})$. Then 
		\begin{equation*}
		|f(z^m)| + \sum_{n=N}^{\infty}|a_n||z|^n \leq d(0, \partial{\Omega})
		\end{equation*}
		holds for $|z|=r_b \leq \min \{ \frac{1}{3}, r_0 \}$, where $m, N\in \mathbb{N}$ and $r_0$ is the minimal positive root of the equation:
		\begin{equation*}
		\int_{0}^{r^m} \frac{\psi(t)}{1-t^2} + R^N(r)=\int_{0}^{1}\dfrac{\psi(-t)}{1+t^2} dt,
		\end{equation*}
		where
		\begin{equation*}
		R^N(r)= \int_{0}^{r} \frac{M_{t}^{N}(\psi) t^{2N}}{t^2(1-t^2)}dt \quad \text{and} \quad f_0(z)=\int_{0}^{z} \frac{\psi(t)}{1-t^2}.
		\end{equation*}
	\end{corollary}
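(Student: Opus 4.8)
The plan is to recognize this Corollary as the diagonal case $g=f$ of Theorem~\ref{BR-S}. Since the identity map subordinates to itself, $f\prec f$, so $f$ itself belongs to the family $S_f(\psi)$ with $b_k=a_k$. Applying Theorem~\ref{BR-S} with $g=f$ then returns precisely the displayed inequality $|f(z^m)|+\sum_{n=N}^{\infty}|a_n||z|^n\leq d(0,\partial\Omega)$, with the very same threshold $r_b\leq\min\{1/3,r_0\}$ and the same defining equation \eqref{BR-S-root1} for $r_0$. So in principle a one-line proof ("take $g=f$ in Theorem~\ref{BR-S}") suffices; for completeness I would also record the direct argument, which is a verbatim specialization of the proof of Theorem~\ref{BR-S}.

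In that direct argument, I would first write, for $f\in\mathcal{K}_{s}(\psi)$, $zf'(z)=G(z)\psi(\omega(z))=:\tilde G(z)$, where $G(z)=-h(z)h(-z)/z=z+\sum_{n\geq 2}h_{2n-1}z^{2n-1}$ is an odd starlike function; hence $f(z)=\int_0^z \tilde G(t)/t\,dt$ and the Taylor coefficients of $f$ satisfy $|a_n|=|\tilde b_n|/n$ with $\tilde b_n$ the coefficients of $\tilde G$. Using the submultiplicativity and subordination properties (i)--(v) of the operator $M_t^N$, together with $\psi\circ\omega\prec\psi$ and $|h_{2n-1}|\leq 1$, exactly as in \eqref{BR-S-proof1}--\eqref{BR-S-proof3}, I would bound $\sum_{n\geq N}|a_n|r^n=\int_0^r M_t^N(\tilde G)/t\,dt\leq \int_0^r M_t^N(\psi)\,t^{2N}/(t^2(1-t^2))\,dt=R^N(r)$ for $r\leq 1/3$. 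Then I would invoke the growth theorem for $\mathcal{K}_{s}(\psi)$ cited from \cite{ChoKwonRavi-2011} to get $|f(z^m)|\leq f(r^m)\leq\hat f_0(r^m)$ with $f_0(z)=\int_0^z\psi(t)/(1-t^2)\,dt$, and the matching lower bound $d(0,\partial\Omega)\geq\int_0^1\psi(-t)/(1+t^2)\,dt$ for the distance to the boundary. Adding the two estimates, the desired inequality holds as soon as $\hat f_0(r^m)+R^N(r)\leq\int_0^1\psi(-t)/(1+t^2)\,dt$, and the largest admissible $r$, subject to $r\leq 1/3$, is $r_b=\min\{1/3,r_0\}$ with $r_0$ the minimal positive root of \eqref{BR-S-root1}; existence of $r_0$ follows from the Intermediate Value Theorem on $[0,1]$, the left side being continuous in $r$.

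There is essentially no obstacle here: the only point demanding even mild attention is the bookkeeping that $\sum_{n\geq N}|a_n|r^n$ is captured exactly by the integral $R^N(r)$ through the relation $|a_n|=|\tilde b_n|/n$ and the interchange of summation and integration justified by locally uniform convergence; all remaining steps are identical to those already carried out for Theorem~\ref{BR-S}. I would therefore present the Corollary with the short deduction from Theorem~\ref{BR-S}, appending the above chain of inequalities only as a convenience for the reader.
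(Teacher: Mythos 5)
Your proposal is correct and matches the paper's intent: the corollary is stated without separate proof precisely because it is the case $g=f$ (using $f\prec f$) of Theorem~\ref{BR-S}, and your optional direct argument is just the verbatim specialization of that theorem's proof. Nothing further is needed.
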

	
	\begin{definition}\label{S*cPsi}
		The class of starlike functions with respect to conjugate points given by
		\begin{equation*}
		\mathcal{S}^*_{c}(\psi)= \left\{f\in \mathcal{A}: \frac{zf'(z)}{f(z)+ \overline{f(\bar{z})} } \prec \psi(z)  \right\}.
		\end{equation*}
	\end{definition}
	\begin{lemma}\emph{(\cite{minda94})}\label{grth}
		Let $f\in \mathcal{S}^*(\psi)$ and $|z_0|=r<1$. Then $f(z)/z \prec f_0(z)/z$ and
		$$-f_0(-r)\leq|f(z_0)|\leq f_0(r).$$
		Equality holds for some $z_0\neq0$ if and only if $f$ is a rotation of $f_0$, where
		\begin{equation}\label{int-rep}
		f_0(z)=z\exp{\int_{0}^{z}\frac{\psi(t)-1}{t}dt}.
		\end{equation}
	\end{lemma}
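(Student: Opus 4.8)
The plan is to recognise this as the Ma--Minda growth theorem and to derive it from logarithmic differentiation of the defining subordination. First I would put $p(z)=zf'(z)/f(z)$, so that $p(0)=1$ and $p\prec\psi$, say $p=\psi\circ\omega$ for a Schwarz function $\omega$. Since $\frac{d}{dz}\log\frac{f(z)}{z}=\frac{p(z)-1}{z}$, integrating along a radius from $0$ to $z$ gives the representation $\log\frac{f(z)}{z}=\int_{0}^{z}\frac{p(t)-1}{t}\,dt$, and the same identity with $f_0$ in place of $f$ yields $zf_0'(z)/f_0(z)=\psi(z)$ and $G(z):=\log\frac{f_0(z)}{z}=\int_{0}^{z}\frac{\psi(t)-1}{t}\,dt$.

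The structural heart of the argument is that $G$ is convex univalent on $\mathbb{D}$. Indeed, the hypothesis that $\psi$ is starlike with respect to $1$ says precisely that $\psi-1$ is a starlike function, and since $zG'(z)=\psi(z)-1$, Alexander's theorem gives that $G$ is convex univalent. Moreover, because $\psi(0)=1$, $\psi'(0)>0$ and $\psi(\mathbb{D})$ is symmetric about $\mathbb{R}$, uniqueness of the Riemann map forces $\psi(\bar z)=\overline{\psi(z)}$; hence $\psi$ is real and strictly increasing on $(-1,1)$, and $G$ inherits $G(\bar z)=\overline{G(z)}$ and is itself real and strictly increasing on $(-1,1)$, with $G(r)=\log(f_0(r)/r)$ and $G(-r)=\log(-f_0(-r)/r)$.

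With this in place the three conclusions follow in turn. For the subordination, from $p-1\prec\psi-1=zG'$ with $G$ convex I would invoke the subordination--preserving property of the Alexander integral operator to conclude $\log\frac{f(z)}{z}=\int_{0}^{z}\frac{p(t)-1}{t}\,dt\prec G(z)$; post-composing with the entire function $\exp$ then yields $f(z)/z\prec f_0(z)/z$. For the two-sided estimate, write $\log\frac{f(z)}{z}=G(\varphi(z))$, where $\varphi$ is an analytic self-map of $\mathbb{D}$ fixing $0$; then $|\varphi(z_0)|\le r$ by Schwarz's lemma, so $\log\frac{f(z_0)}{z_0}$ lies in the closed convex set $G(\{|z|\le r\})$, whose largest and smallest real parts, by convexity and symmetry about $\mathbb{R}$, are $G(r)$ and $G(-r)$; this is exactly $-f_0(-r)\le|f(z_0)|\le f_0(r)$. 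For equality, if $|f(z_0)|=f_0(r)$ then $\RE G(\varphi(z_0))$ attains the maximal real part of $G$ over $\{|z|\le r\}$; since that maximum is attained only at the single boundary point $G(r)$---a strict support point of the convex set $G(\{|z|\le r\})$, for otherwise $G$ would be constant along an arc---we get $\varphi(z_0)=r$ with $|\varphi(z_0)|=|z_0|$, and the equality clause of Schwarz's lemma forces $\varphi(z)=\lambda z$ with $|\lambda|=1$, that is $f(z)=\lambda^{-1}f_0(\lambda z)$; the case $|f(z_0)|=-f_0(-r)$ is symmetric, and the converse is immediate.

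I expect the main obstacle to be a clean justification of the two ingredients behind the subordination: that $G=\log(f_0/z)$ is really convex univalent (an instance of Alexander's theorem, but one must verify the normalisations and that starlikeness of $\psi-1$ indeed follows from the stated hypotheses on $\psi$), and that the Alexander operator $h\mapsto\int_{0}^{z}\frac{h(t)}{t}\,dt$ carries functions subordinate to $zG'$ into functions subordinate to $G$ whenever $G$ is convex. Once these are secured, the rest is Schwarz's lemma together with the elementary convex geometry of the region $G(\{|z|<r\})$.
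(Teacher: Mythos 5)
The paper does not actually prove this lemma: it is imported verbatim from Ma and Minda's original work, so the only benchmark is the classical argument there, and your proposal is essentially that argument. You take the same route as the source: $\log(f/z)=\int_0^z\frac{p(t)-1}{t}\,dt$ with $p\prec\psi$, convexity of $G=\log(f_0/z)$ via Alexander's theorem applied to the starlike function $\psi-1$, Suffridge's subordination theorem for the integral operator to get $\log(f/z)\prec G$, and then Schwarz's lemma together with the geometry of the convex, real-symmetric set $G(\{|z|\le r\})$ (Study's theorem) for the growth bounds and the equality case. The two ingredients you flag are indeed the ones to cite rather than reprove (Suffridge 1970 for the subordination step, Study for convexity of $G(\{|z|\le r\})$), and your symmetry argument $\psi(\bar z)=\overline{\psi(z)}$ via uniqueness of the Riemann map is fine. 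The one place your justification is off is the uniqueness of the maximizer in the equality case: if $\Re G(w_1)=G(r)$ for some non-real $w_1$ with $|w_1|\le r$, what follows is not that ``$G$ is constant along an arc'' but that the boundary of the convex image contains a vertical segment through $G(r)$, i.e.\ that $\Re G$ is constant on an arc of the circle $|z|=r$. To exclude this, note that since $r<1$ the map $\theta\mapsto\Re G(re^{i\theta})$ is real-analytic, so constancy on an arc forces constancy on the whole circle, and then the maximum principle makes $\Re G$, hence $G$, constant on $|z|\le r$, contradicting univalence; with that repair, $\varphi(z_0)=r$ and the Schwarz-lemma equality clause give $f(z)=\lambda^{-1}f_0(\lambda z)$ exactly as you state.
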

	
	\begin{theorem}\label{BR-Convexcase}
		Let $h_{\psi}$ be given by \eqref{int-rep} and $f(z)=z+\sum_{n=2}^{\infty}a_n z^n \in \mathcal{S}^*_{c}(\psi)$. If $g\in S_{f}(\psi)$. Then 
		\begin{equation}\label{Sf-convex}
		|g(z^m)| + \sum_{k=N}^{\infty}|b_k||z|^k \leq d(0, \partial{\Omega})
		\end{equation}
		holds for $|z|=r_b \leq \min \{ \frac{1}{3}, r_0 \}$, where $m, N\in \mathbb{N}$, $\Omega=f(\mathbb{D})$ and $r_0$ is the unique positive root of the equation:
		\begin{equation}\label{BR-Convexcase-root0}
		h_{\psi}(r^m)+R^N(r)+h_{\psi}(-1)=0,
		\end{equation}
		where 
		\begin{equation*}
		R^N(r)=\int_{0}^{r}\frac{M_{t}^{N}(h_{\psi}). M_{t}^{N}(\psi) }{t}dt
		\end{equation*}
		The result is sharp when $r_b=r_0$ and $t_n>0$.
	\end{theorem}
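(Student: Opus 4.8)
The plan is to mirror the structure of the proof of Theorem~\ref{BR-S}, replacing the odd-starlike factorization by the conjugate-point structure. First I would record the defining identity: for $f \in \mathcal{S}^*_c(\psi)$, setting $2h(z) = f(z) + \overline{f(\bar z)}$ (which has real Taylor coefficients since $\overline{f(\bar z)} = \sum \overline{a_n} z^n$), we have $zf'(z) = h(z)\,\psi(\omega(z))$ for some Schwarz function $\omega$; moreover it is classical (see \cite{RaviConjugate-2004}) that $h \in \mathcal{S}^*(\psi)$, so by Lemma~\ref{grth} its coefficients are dominated by those of $\hat h_\psi$ within $|z| \leq 1/3$ via Lemma~\ref{series-lem}. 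Thus, writing $zf'(z) =: \tilde G(z) = \sum \tilde b_n z^n$, I get $|a_n| = |\tilde b_n|/n$ and, using multiplicativity of $M^N_r$ together with $\psi \circ \omega \prec \psi$ (hence $M^N_t(\psi\circ\omega) \leq M^N_t(\psi)$ for $t \leq 1/3$ by Lemma~\ref{series-lem}),
\begin{align*}
\sum_{k=N}^{\infty} |b_k| r^k \leq \sum_{n=N}^{\infty} \frac{|\tilde b_n|}{n} r^n = \int_0^r \frac{M^N_t(\tilde G)}{t}\, dt \leq \int_0^r \frac{M^N_t(h_\psi)\, M^N_t(\psi)}{t}\, dt = R^N(r),
\end{align*}
valid for $r \leq 1/3$; here I apply Lemma~\ref{series-lem} in the form $M^N_t(h) \leq M^N_t(h_\psi)$ since $h(z)/z \prec h_\psi(z)/z$.

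Next I would handle the growth term. Since $g \prec f$, $|g(z^m)| \leq \max_{|z| \leq r^m} |f(z)|$, and the growth theorem for $\mathcal{S}^*_c(\psi)$ (the analogue of \cite[Theorem~2]{ChoKwonRavi-2011}, or directly from $zf' = h\psi(\omega)$ with $h \in \mathcal{S}^*(\psi)$) gives $|f(z^m)| \leq \hat h_\psi(r^m)$, where I write $\hat h_\psi(r) = r + \sum |t_n| r^n$. Then I would invoke the lower bound for the Koebe-type radius, $d(0, \partial\Omega) \geq -h_\psi(-1)$, which follows from the corresponding covering theorem for the class. Combining these three ingredients,
\begin{align*}
|g(z^m)| + \sum_{k=N}^{\infty} |b_k| |z|^k \leq \hat h_\psi(r^m) + R^N(r) \leq -h_\psi(-1)
\end{align*}
holds precisely when $r$ is at most the smallest positive root $r_0$ of $h_\psi(r^m) + R^N(r) + h_\psi(-1) = 0$ (replacing $\hat h_\psi$ by $h_\psi$ is legitimate in the sharp case $t_n > 0$; otherwise one works with $\hat h_\psi$ throughout and the stated equation is the sharp-case specialization). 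Existence of $r_0 \in (0,1)$ follows from the Intermediate Value Theorem applied to the continuous function $r \mapsto h_\psi(r^m) + R^N(r) + h_\psi(-1)$, which is negative at $r=0$ and tends to a nonnegative value as $r \to 1^-$. Intersecting with the constraint $r \leq 1/3$ coming from Lemma~\ref{series-lem} yields the radius $r_b = \min\{1/3, r_0\}$.

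Finally, for sharpness when $r_b = r_0$ and $t_n > 0$, I would take $f = $ (the extremal of $\mathcal{S}^*_c(\psi)$ built from $h_\psi$, with $\omega(z) = z$) and $g = f$; then all the inequalities above become equalities at $|z| = r_0$, so $r_0$ cannot be enlarged. The main obstacle I anticipate is verifying cleanly that $h = \tfrac12(f + \overline{f(\bar z)}) \in \mathcal{S}^*(\psi)$ and that the growth bound $|f(z)| \leq \hat h_\psi(|z|)$ genuinely holds for this class — the former is standard but needs the observation that $2zh'(z)/(2h(z)) = \tfrac{zf'(z) + \overline{\bar z f'(\bar z)}}{f(z) + \overline{f(\bar z)}}$ lies in $\psi(\mathbb{D})$, and the latter requires integrating $|f'| \leq |h(z)/z|\,\hat\psi$-type estimates; both are available from \cite{RaviConjugate-2004,ChoKwonRavi-2011} but must be cited in exactly the right form. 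The rest is the now-routine $M^N_r$-operator bookkeeping already used in Theorem~\ref{BR-S}.
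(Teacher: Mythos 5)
Your proposal follows essentially the same route as the paper's own proof: decompose $zf'(z)=G(z)\psi(\omega(z))$ with $G(z)=\tfrac12\bigl(f(z)+\overline{f(\bar z)}\bigr)\in\mathcal{S}^*(\psi)$, apply Lemma~\ref{series-lem} to get $M^N_t(G)\leq M^N_t(h_\psi)$ and $M^N_t(\psi\circ\omega)\leq M^N_t(\psi)$ for $t\leq 1/3$, integrate to bound the tail by $R^N(r)$, bound $|g(z^m)|$ via the growth theorem and $d(0,\partial\Omega)\geq -h_\psi(-1)$, invoke the Intermediate Value Theorem for $r_0$, and take $g=f=h_\psi$ for sharpness. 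The only cosmetic difference is that the paper invokes the growth theorem of \cite{RaviConjugate-2004} to write $|g(z^m)|\leq h_\psi(r^m)$ directly (so the root equation \eqref{BR-Convexcase-root0} appears immediately), whereas you route through $\hat h_\psi(r^m)$ and then reconcile with $h_\psi$ under the positivity assumption $t_n>0$.
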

	\begin{proof}
		Since the function $G(z)= (f(z)+ \overline{f(\bar{z})})/{2}$ belongs to $\mathcal{S}^*(\psi)$. Therefore, by Lemma~\cite{minda94} we have
		\begin{equation*}
		\frac{G(z)}{z} \prec \frac{h_{\psi}(z)}{z},
		\end{equation*}
		which using Lemma~\ref{series-lem} yields
		\begin{equation}\label{BR-Convexcase-proof0}
		M_{r}^{N}(G) \leq M_{r}^{N}(h_{\psi}) \quad \text{for} \quad r\leq \frac{1}{3}.
		\end{equation}
		From Definition~\ref{S*cPsi}, we get $zf'(z)=G(z)\psi(\omega(z))$ which after integration gives
		\begin{equation}\label{BR-Convexcase-proof1}
		f(z)=\int_{0}^{z}\frac{G(t) \psi(\omega(t))}{t}dt.
		\end{equation}
		Since $\psi\circ \omega \prec \psi$,
		\begin{equation}\label{BR-Convexcase-proof2}
		M_{r}^{N}(\psi\circ \omega) \leq M_{r}^{N}(\psi) \quad \text{for} \quad r\leq \frac{1}{3}.
		\end{equation}
		Thus, combining \eqref{BR-Convexcase-proof0}, \eqref{BR-Convexcase-proof1} and \eqref{BR-Convexcase-proof2}, we see that
		\begin{align*}
		\sum_{k=N}^{\infty}|b_k||z|^k &= M_{r}^{N}(g) \leq M_{r}^{N}(f) \\
		& = \int_{0}^{r} \frac{M_{t}^{N}(G) M_{t}^{N}(\psi\circ \omega)}{t} dt\\
		& \leq \int_{0}^{r} \frac{M_{t}^{N}(h_{\psi}) M_{t}^{N}(\psi)}{t} dt =: R^N(r),
		\end{align*}
		holds for $r\leq 1/3$. Also, using Maximum-principle of modulus and growth theorem~\cite{RaviConjugate-2004}, $g\prec f$ implies that
		\begin{align*}
		|g(|z|\leq r)|= |f(\omega(|z| \leq r))| \leq \max_{|z|=r}|f(|z|\leq r)| = h_{\psi}(r),
		\end{align*}
		which yields
		\begin{equation*}
		|g(z^m)| \leq h_{\psi}(r^m).
		\end{equation*}
		Finally, note that
		\begin{equation*}
		d(0, \Omega) \geq - h_{\psi}(-1).
		\end{equation*}
		Hence, 
		\begin{equation*}
		|g(z^m)| + \sum_{k=N}^{\infty}|b_k||z|^k \leq d(0, \partial{\Omega})
		\end{equation*}
		holds for $|z|\leq \{1/3, r_0\}$, where $r_0$ is the root of the equation~\eqref{BR-Convexcase-root0}. The existence of the root follows by Intermediate value theorem for continuous function in $[0,1]$. For the sharpness, note that for the function $h_{\psi}$
		\begin{equation*}
		d(0, \Omega) = - h_{\psi}(-1)
		\end{equation*}
		such that if $r_b=r_0$, then for the choice $g=f=h_{\psi}$:
		\begin{equation*}
		|h_{\psi}(z^m)| + \sum_{n=N}^{\infty}|t_n||z|^n = d(0, \partial{\Omega})
		\end{equation*}
		holds for $|z|=r_b$ with $t_n>0$, where $h_{\psi}(z)=z+\sum_{n=2}^{\infty}t_n z^n$ as given in \eqref{int-rep}. \qed
	\end{proof}	
	
	\begin{remark}
		Let $\psi(z)=(1+z)/(1-z)$, then Theorem~\ref{BR-Convexcase} reduces to \cite[Theorem~6]{Kayumov-2021}.
	\end{remark}	
	
	The following result is explicitly for the class $\mathcal{S}^*_{c}(\psi)$.
	\begin{corollary}\label{convexclass-BR}
		Let $f(z)=z+\sum_{n=2}^{\infty}a_n z^n \in \mathcal{S}^*_{c}(\psi)$. Then 
		\begin{equation}\label{convexclass-BR-expr0}
		|f(z^m)| + \sum_{n=N}^{\infty}|a_n||z|^n \leq d(0, \partial{\Omega})
		\end{equation}
		holds for $|z|=r_b \leq \min \{ \frac{1}{3}, r_0 \}$, where $m, N\in \mathbb{N}$, $\Omega=f(\mathbb{D})$ and $r_0$ is the unique positive root of the equation:
		\begin{equation*}
		h_{\psi}(r^m)+R^N(r)+h_{\psi}(-1)=0,
		\end{equation*}
		where 
		\begin{equation*}
		R^N(r)=\int_{0}^{r}\frac{M_{t}^{N}(h_{\psi}). M_{t}^{N}(\psi) }{t}dt
		\end{equation*}
		and  $h_{\psi}$ be given by \eqref{int-rep}. The result is sharp when $r_b=r_0$ and $t_n>0$.
	\end{corollary}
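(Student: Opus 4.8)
The plan is to deduce Corollary~\ref{convexclass-BR} as the diagonal case $g=f$ of Theorem~\ref{BR-Convexcase}. First I would observe that the identity map is a Schwarz function, so $f \prec f$, which places $f$ itself inside the subordinant family $S_{f}(\psi)$. Consequently $b_k = a_k$ for every $k$, and the inequality \eqref{Sf-convex} of Theorem~\ref{BR-Convexcase} specializes verbatim to \eqref{convexclass-BR-expr0}, with the same radius $r_b \le \min\{1/3, r_0\}$ and the same defining equation $h_{\psi}(r^m)+R^N(r)+h_{\psi}(-1)=0$ for $r_0$, where $R^N(r)=\int_{0}^{r} M_{t}^{N}(h_{\psi})\,M_{t}^{N}(\psi)\, t^{-1}\,dt$ and $h_{\psi}$ is given by \eqref{int-rep}. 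Nothing in the hypotheses of Theorem~\ref{BR-Convexcase} is lost upon taking $g=f$, so no extra work is needed for the bound.

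For the sharpness claim, I would simply carry over the extremal configuration already exhibited in the proof of Theorem~\ref{BR-Convexcase}: taking $\psi$ with nonnegative Taylor coefficients, the function $f = h_{\psi} = z + \sum_{n=2}^{\infty} t_n z^n$ with $t_n > 0$ lies in $\mathcal{S}^*_{c}(\psi)$ (since the associated $G(z) = (f(z)+\overline{f(\bar z)})/2$ coincides with $h_{\psi}\in\mathcal{S}^*(\psi)$ and $zf'(z) = G(z)\psi(z)$), realizes the equality $d(0,\partial\Omega) = -h_{\psi}(-1)$, and when $r_b = r_0$ makes
\[
|h_{\psi}(z^m)| + \sum_{n=N}^{\infty} |t_n| |z|^n = d(0,\partial\Omega)
\]
hold at $|z| = r_b$. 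Hence the radius cannot be enlarged under the stated conditions.

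There is essentially no obstacle here: the corollary is a direct specialization, and the only point that warrants a line of justification is that $f\prec f$, i.e.\ that $f\in S_{f}(\psi)$, which is immediate. I would therefore keep the write-up to two or three sentences invoking Theorem~\ref{BR-Convexcase} with $g=f$ and pointing to its sharpness argument.
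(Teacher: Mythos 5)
Your proposal is correct and matches the paper's (implicit) argument: the corollary is stated without a separate proof precisely because it is the specialization $g=f$ of Theorem~\ref{BR-Convexcase}, using that $f\prec f$ puts $f\in S_{f}(\psi)$. Your sharpness remark, that $f=h_{\psi}$ (real coefficients, so $G=h_{\psi}$ and $zf'(z)=G(z)\psi(z)$) realizes equality when $r_b=r_0$ and $t_n>0$, is exactly the extremal configuration used in the theorem's proof.
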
	
	\begin{remark}
		Taking $m\rightarrow \infty$ and $N=1$ in Theorem~\ref{BR-Convexcase} and Corollary~\ref{convexclass-BR} establish the Bohr phenonmenon for the classes $S_{f}(\psi)$ and $\mathcal{S}^*_{c}(\psi)$, respectively given in \cite[Lemma~2.12]{AlluHalder-2021cc} and \cite[Theorem~2.9]{AlluHalder-2021cc}.
	\end{remark}	
	
	To proceed further, we need to recall the following fundamental result:
	\begin{lemma}\label{C-grth}\cite{minda94}
		Let $f\in \mathcal{C}(\psi)$. Then $zf''(z)/f'(z) \prec zl_{0}''(z)/l_{0}'(z)$ and $f'(z)\prec l_{0}'(z)$. Also, for $|z|=r$ we have 
		$$-l_{0}(-r)\leq |f(z)| \leq l_{0}(r),$$
		where 
		\begin{equation}\label{int-rep-C}
		1+zl_{0}''(z)/l_{0}'(z)=\psi(z).
		\end{equation}
	\end{lemma}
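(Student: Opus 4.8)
The plan is to reduce the statement to the already established Ma--Minda starlike growth theorem, Lemma~\ref{grth}, via Alexander's correspondence. First I would put $g(z):=zf'(z)$ and compute $zg'(z)/g(z)=1+zf''(z)/f'(z)$, so that $f\in\mathcal{C}(\psi)$ if and only if $g\in\mathcal{S}^*(\psi)$. Next, writing $p(z):=l_0'(z)$, the defining relation \eqref{int-rep-C} reads $1+zp'(z)/p(z)=\psi(z)$, i.e. $(\log p)'(z)=(\psi(z)-1)/z$; since $\psi(0)=1$ the right-hand side is analytic at the origin and $p(0)=1$, so integrating gives
\begin{equation*}
l_0'(z)=\exp\int_0^z\frac{\psi(t)-1}{t}\,dt=\frac{f_0(z)}{z},
\end{equation*}
where $f_0(z)=z\exp\int_0^z(\psi(t)-1)/t\,dt$ is the extremal function of Lemma~\ref{grth}. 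Thus $zl_0'(z)=f_0(z)$, which identifies the extremals of the two classes and is the bridge used throughout.

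Applying Lemma~\ref{grth} to $g=zf'\in\mathcal{S}^*(\psi)$, the subordination $g(z)/z\prec f_0(z)/z$ is exactly $f'(z)\prec l_0'(z)$. For the other subordination, $g\in\mathcal{S}^*(\psi)$ means $1+zf''(z)/f'(z)=zg'(z)/g(z)=\psi(\omega(z))$ for some Schwarz function $\omega$; subtracting $1$ (a translation fixing $\psi(0)-1=0$) yields $zf''(z)/f'(z)=(\psi-1)(\omega(z))\prec\psi(z)-1=zl_0''(z)/l_0'(z)$, as required.

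For the two-sided growth estimate I would use $-f_0(-r)\le|g(z_0)|=r|f'(z_0)|\le f_0(r)$ for $|z_0|=r$, again from Lemma~\ref{grth}. The upper bound follows by integrating along the radial segment: writing $f(z)=z\int_0^1 f'(tz)\,dt$ and using $|f'(tz)|\le f_0(tr)/(tr)$ gives $|f(z)|\le\int_0^r f_0(s)/s\,ds=\int_0^r l_0'(s)\,ds=l_0(r)$. The lower bound is the delicate step: since $\Re\psi>0$ forces $\mathcal{C}(\psi)\subset\mathcal{C}$, the domain $f(\mathbb{D})$ is convex, so the segment $[0,f(z_0)]$ lies in $f(\mathbb{D})$ and has an $f$-preimage, a curve $\gamma$ joining $0$ to $z_0$ along which $f$ traces that segment, whence $|f(z_0)|=\int_\gamma|f'(\zeta)|\,|d\zeta|$; combining with $|f'(\zeta)|\ge -f_0(-|\zeta|)/|\zeta|$ and the elementary inequality $\int_\gamma\phi(|\zeta|)\,|d\zeta|\ge\int_0^r\phi(\rho)\,d\rho$ (valid for every nonnegative $\phi$ and every curve from $0$ to a point of modulus $r$) gives $|f(z_0)|\ge\int_0^r -f_0(-\rho)/\rho\,d\rho=\int_0^r l_0'(-\rho)\,d\rho=-l_0(-r)$. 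Finally, equality in either bound forces equality in the corresponding Ma--Minda estimate for $g$, so $g$, and hence $f$, is a rotation of the extremal, which gives the stated equality case. I expect the curve-integral argument for the lower bound to be the main obstacle; everything else is a routine transfer from Lemma~\ref{grth}.
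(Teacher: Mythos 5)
The paper itself offers no proof of this lemma---it is quoted directly from Ma and Minda \cite{minda94}---and, measured against that source, your argument is correct and essentially reconstructs the classical proof: the transfer $g=zf'$ together with Lemma~\ref{grth} and the identity $l_0'(z)=f_0(z)/z$ gives $f'\prec l_0'$, the subordination $zf''/f'\prec zl_0''/l_0'$, and the distortion bounds $l_0'(-\rho)\le |f'(\zeta)|\le l_0'(\rho)$, after which radial integration yields the upper growth bound and the convexity/segment-preimage argument with $\int_\gamma \phi(|\zeta|)\,|d\zeta|\ge\int_0^{r}\phi(\rho)\,d\rho$ yields the lower one. The only detail worth stating explicitly is the normalization $l_0(0)=0$, $l_0'(0)=1$, which \eqref{int-rep-C} alone does not determine but which the paper assumes in writing $k_\psi(z)=z+\sum_{n=2}^{\infty}l_n z^n$; with that noted, your proof is complete.
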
	
	
	\begin{definition}\label{CcPsi}
		The class of convex functions with respect to conjugate points given by
		\begin{equation*}
		\mathcal{C}_{c}(\psi)= \left\{f\in \mathcal{A}: \frac{(zf'(z))'}{(f(z)+ \overline{f(\bar{z})})' } \prec \psi(z)  \right\}.
		\end{equation*}
	\end{definition}
	
	\begin{theorem}\label{BR-CcPsi}
		Let $f(z)=z+\sum_{n=2}^{\infty}a_n z^n \in \mathcal{C}_{c}(\psi)$. If $g\in S_{f}(\psi)$. Then 
		\begin{equation*}
		|g(z^m)| + \sum_{k=N}^{\infty}|b_k||z|^k \leq d(0, \partial{\Omega})
		\end{equation*}
		holds for $|z|=r_b \leq \min \{ \frac{1}{3}, r_0 \}$, where $m, N\in \mathbb{N}$, $\Omega=f(\mathbb{D})$ and $r_0$ is the minimal positive root of the equation:
		\begin{equation}\label{BR-CcPsi-root0}
		k_{\psi}(r^m)+R^N(r)+k_{\psi}(-1)=0,
		\end{equation}
		where 
		\begin{equation*}
		R^N(r)=\int_{0}^{r}\frac{1}{s}\int_{0}^{s}{M_{t}^{N}(k'_{\psi}). M_{t}^{N}(\psi) }dtds,
		\end{equation*}
		and $k_{\psi}(z)=z+\sum_{n=2}^{\infty}l_n z^n$ is given by \eqref{int-rep-C}.
		The result is sharp when $r_b=r_0$ and $l_n>0$.
	\end{theorem}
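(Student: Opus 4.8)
The plan is to mimic the proof of Theorem~\ref{BR-Convexcase} verbatim, transferring each ingredient from the starlike-with-respect-to-conjugate-points setting to the convex-with-respect-to-conjugate-points setting by differentiating once and integrating once more. First I would observe that if $G(z)=(f(z)+\overline{f(\bar z)})/2$ then $G\in\mathcal C(\psi)$ whenever $f\in\mathcal C_c(\psi)$: indeed $(zf'(z))'/(G'(z))=(zf'(z))'/((f(z)+\overline{f(\bar z)})')\prec\psi$, and since the Taylor coefficients of $G$ are the real parts of those of $f$, $G$ is normalized and lies in $\mathcal C(\psi)$. By Lemma~\ref{C-grth}, $G'(z)\prec k_\psi'(z)$ where $k_\psi$ is defined by \eqref{int-rep-C}, hence $G'/1\prec k_\psi'$, and applying Lemma~\ref{series-lem} to the pair $G',k_\psi'$ gives $M_t^N(G')\le M_t^N(k_\psi')$ for $t\le 1/3$.

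Next I would unwind the defining subordination. From Definition~\ref{CcPsi}, $(zf'(z))'=G'(z)\,\psi(\omega(z))$ for some Schwarz function $\omega$; integrating once yields $zf'(z)=\int_0^z G'(s)\psi(\omega(s))\,ds$, and dividing by $z$ and integrating again gives
\begin{equation*}
f(z)=\int_0^z\frac{1}{s}\int_0^s G'(t)\,\psi(\omega(t))\,dt\,ds.
\end{equation*}
Since $\psi\circ\omega\prec\psi$, Lemma~\ref{series-lem} gives $M_t^N(\psi\circ\omega)\le M_t^N(\psi)$ for $t\le1/3$, and the submultiplicativity property (iv) of $M_t^N$ applied to the product $G'\cdot(\psi\circ\omega)$ together with the monotone behaviour of $M_t^N$ under the two iterated integrations (the $k$-th coefficient of $\int_0^z\frac1s\int_0^s u(t)\,dt\,ds$ has modulus bounded by the corresponding coefficient of the double integral of $\hat u$) yields, for $g\prec f$ and $r\le1/3$,
\begin{align*}
\sum_{k=N}^{\infty}|b_k|r^k=M_r^N(g)&\le M_r^N(f)\\
&\le\int_0^r\frac1s\int_0^s M_t^N(G')\,M_t^N(\psi\circ\omega)\,dt\,ds\\
&\le\int_0^r\frac1s\int_0^s M_t^N(k_\psi')\,M_t^N(\psi)\,dt\,ds=:R^N(r).
\end{align*}
For the first summand, the maximum-modulus principle and the growth theorem for $\mathcal C_c(\psi)$ (the convex analogue of \cite{RaviConjugate-2004}, equivalently Lemma~\ref{C-grth} applied to $G$) give $|g(z^m)|\le|f(z^m)|\le k_\psi(r^m)$, while the Koebe-type estimate gives $d(0,\partial\Omega)\ge -k_\psi(-1)$. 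Adding the two bounds and invoking the intermediate value theorem on $[0,1]$ to locate the minimal root $r_0$ of \eqref{BR-CcPsi-root0} closes the argument on $|z|\le\min\{1/3,r_0\}$.

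For sharpness when $r_b=r_0$ and $l_n>0$, I would take $g=f=k_\psi$: then $G=k_\psi\in\mathcal C(\psi)$ so all the coefficient inequalities above become equalities, $d(0,\partial\Omega)=-k_\psi(-1)$, and the defining equation \eqref{BR-CcPsi-root0} forces $|k_\psi(z^m)|+\sum_{n=N}^{\infty}|l_n||z|^n=d(0,\partial\Omega)$ at $|z|=r_0$, so $r_0$ cannot be enlarged. The main obstacle I anticipate is bookkeeping the coefficient bound through the double integration: I must verify that if $u(t)=\sum c_nt^{n}$ then the Taylor coefficients of $\int_0^z\frac1s\int_0^s u(t)\,dt\,ds$ are $c_n/((n+1)(n+2))$ (shifted appropriately), so that $M_r^N$ of the iterated integral is dominated by the iterated integral of $M_t^N(u)/t$-type expressions exactly as written in $R^N(r)$; this is the place where the precise form of $R^N$ in the statement gets pinned down, and it is essential that $N\ge1$ so the inner integrals converge at $0$. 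Everything else is a routine transcription of the proof of Theorem~\ref{BR-Convexcase}, and I would also remark, in parallel with the earlier remarks, that $\psi(z)=(1+z)/(1-z)$ recovers the corresponding classical result and that $m\to\infty$, $N=1$ gives the Bohr phenomenon for $\mathcal C_c(\psi)$.
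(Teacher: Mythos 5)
Your proposal follows the paper's own proof of this theorem essentially verbatim: the same decomposition $G(z)=(f(z)+\overline{f(\bar z)})/2\in\mathcal{C}(\psi)$, the subordination $G'\prec k_\psi'$ from Lemma~\ref{C-grth} combined with Lemma~\ref{series-lem}, the factorization $(zf'(z))'=G'(z)\psi(\omega(z))$ with double integration, the submultiplicativity of $M_r^N$, the growth and Koebe-type bounds $|g(z^m)|\le k_\psi(r^m)$ and $d(0,\partial\Omega)\ge -k_\psi(-1)$, the intermediate value theorem for $r_0$, and sharpness via $g=f=k_\psi$. The only quibble is in your side remark on bookkeeping: the coefficient of the twice-integrated series is $c_n/(n+1)^2$ rather than $c_n/((n+1)(n+2))$, but this does not alter the structure of the argument, which coincides with the paper's.
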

	\begin{proof}
		Consider the function
		\begin{equation*}
		G(z)=\frac{f(z)+\overline{f(\bar{z})}}{2}. 
		\end{equation*}
		Then $G\in \mathcal{C}(\psi) $. Now from Definition~\ref{CcPsi}, we see that 
		\begin{equation}\label{BR-CcPsi-proof0}
		(zf'(z))'= G'(z)\psi(\omega(z)).
		\end{equation}
		This gives
		\begin{equation}\label{BR-CcPsi-proof1}
		f(z)= \int_{0}^{z}\frac{1}{y} \int_{0}^{y} G'(t) \psi(\omega(t)) dt dy.
		\end{equation}
		As $G' \prec k'_{\psi}$, see Lemma~\ref{C-grth}, it follows using Lemma~\ref{series-lem} that
		\begin{equation}\label{BR-CcPsi-proof2}
		M_{r}^{N}(G') \leq M_{r}^{N}(k'_{\psi}) \quad \text{for} \quad r\leq \frac{1}{3}.
		\end{equation}
		Hence, using \eqref{BR-CcPsi-proof0}, \eqref{BR-CcPsi-proof1} and \eqref{BR-CcPsi-proof2}
		\begin{align*}
		|g(z^m)| + M_{r}^{N}(f) &\leq k_{\psi}(r^m)+ \int_{0}^{r}\frac{1}{y} \int_{0}^{y} M_{t}^{N}(G') M_{t}^{N}(\psi\circ\omega) dt dy\\
		&\leq k_{\psi}(r^m)+ \int_{0}^{r}\frac{1}{y} \int_{0}^{y} M_{t}^{N}(k'_{\psi}) M_{t}^{N}(\psi) dt dy \\
		& \leq -k_{\psi}(-1)\\
		& \leq d(0, \partial\Omega),
		\end{align*}
		holds for $|z|=r\leq r_b=\{1/3, r_0\} $, where $r_0$ is minimal root of the equation~\eqref{BR-CcPsi-root0}. The existence of $0<r_0<1$ can be seen by Intermediate value theorem for continuous function in $[0,1]$. The case of equality 
		\begin{equation*}
		|g(z^m)| + \sum_{k=N}^{\infty}|b_k||z|^k = d(0, \partial{\Omega})
		\end{equation*}
		follows with the choice $g=f=k_{\psi}$, whenever $r_b=r_0$ and $l_n>0$. \qed
	\end{proof}
	
	\begin{corollary}\label{CcPsi-corollary}
		If $f(z)=z+\sum_{n=2}^{\infty}a_n z^n \in \mathcal{C}_{c}(\psi)$. Then 
		\begin{equation*}
		|f(z^m)| + \sum_{k=N}^{\infty}|a_n||z|^n \leq d(0, \partial{\Omega})
		\end{equation*}
		holds for $|z|=r_b \leq \min \{ \frac{1}{3}, r_0 \}$, where $m, N\in \mathbb{N}$, $\Omega=f(\mathbb{D})$ and $r_0$ is the minimal positive root of the equation:
		\begin{equation*}
		k_{\psi}(r^m)+R^N(r)+k_{\psi}(-1)=0,
		\end{equation*}
		where 
		\begin{equation*}
		R^N(r)=\int_{0}^{r}\frac{1}{s}\int_{0}^{s}{M_{t}^{N}(k'_{\psi}). M_{t}^{N}(\psi) }dtds.
		\end{equation*}
		The result is sharp when $r_b=r_0$ and $l_n>0$.
	\end{corollary}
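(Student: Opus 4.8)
The plan is to obtain Corollary~\ref{CcPsi-corollary} as the special case $g = f$ of Theorem~\ref{BR-CcPsi}. Every $f \in \mathcal{A}$ satisfies $f \prec f$ (via the identity Schwarz function $\omega(z) = z$), so if $f \in \mathcal{C}_{c}(\psi)$ then $f \in S_{f}(\psi)$ with power series coefficients $b_1 = 1$ and $b_k = a_k$ for $k \geq 2$. Substituting these into the conclusion of Theorem~\ref{BR-CcPsi} and into the defining equation \eqref{BR-CcPsi-root0} yields exactly the inequality \eqref{convexclass-BR-expr0}, valid for $|z| = r_b \leq \min\{1/3,\, r_0\}$, with the same $R^N(r)$ and the same equation for $r_0$. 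This is really all that is needed; the only points meriting a line of verification are that $g = f$ is a genuine element of $S_{f}(\psi)$ (done above) and that the minimal positive root $r_0 \in (0,1)$ exists, which follows from the Intermediate Value Theorem applied to $r \mapsto k_{\psi}(r^m) + R^N(r) + k_{\psi}(-1)$ exactly as in the proof of Theorem~\ref{BR-CcPsi}.

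If a self-contained argument is preferred, one replays that proof verbatim with $g$ replaced by $f$: set $G(z) = (f(z) + \overline{f(\bar z)})/2$, which lies in $\mathcal{C}(\psi)$ by Definition~\ref{CcPsi}; from $(zf'(z))' = G'(z)\,\psi(\omega(z))$ derive the double-integral representation of $f$; use Lemma~\ref{C-grth} to get $G' \prec k'_{\psi}$ and hence, by Lemma~\ref{series-lem}, $M_r^N(G') \leq M_r^N(k'_{\psi})$ for $r \leq 1/3$; combine this with $\psi\circ\omega \prec \psi$ (again Lemma~\ref{series-lem}, giving $M_r^N(\psi\circ\omega) \leq M_r^N(\psi)$) to bound $M_r^N(f) \leq R^N(r)$; apply the growth theorem in Lemma~\ref{C-grth} to get $|f(z^m)| \leq k_{\psi}(r^m)$; and use the Koebe-type bound $d(0,\partial\Omega) \geq -k_{\psi}(-1)$. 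Adding the two estimates gives the desired inequality for $|z| = r \leq \{1/3,\, r_0\}$.

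For sharpness, take $f = k_{\psi}$, so that $a_n = l_n$ with $l_n > 0$. Then $d(0,\partial\Omega) = -k_{\psi}(-1)$, and for this extremal function $\psi\circ\omega = \psi$, $G = k_{\psi}$, and all the modulus estimates are attained along the positive real axis; hence every intermediate inequality becomes an equality at $|z| = r_0$, giving $|k_{\psi}(z^m)| + \sum_{n=N}^{\infty} |l_n||z|^n = d(0,\partial\Omega)$ there. Therefore $r_0$ cannot be improved when $r_b = r_0$. I do not anticipate any real obstacle: the corollary is a direct reading of Theorem~\ref{BR-CcPsi}, and the structural work (the integral representations, the operator inequalities, and the root-existence argument) has already been carried out in that proof.
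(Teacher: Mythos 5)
Your proposal is correct and follows exactly the route the paper intends: the corollary is the specialization $g=f$ of Theorem~\ref{BR-CcPsi} (legitimate since $f\prec f$, so $f\in S_{f}(\psi)$ with $b_k=a_k$), and your self-contained replay mirrors the paper's proof of that theorem step for step, including the sharpness choice $f=k_{\psi}$ with $l_n>0$. The only nitpick is cosmetic: the inequality you cite as \eqref{convexclass-BR-expr0} belongs to Corollary~\ref{convexclass-BR} (the $\mathcal{S}^*_{c}(\psi)$ case); here you should simply display the corresponding inequality for $\mathcal{C}_{c}(\psi)$.
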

	\begin{remark}
		Taking $m\rightarrow\infty$ and $N=1$ in Corollary~\ref{CcPsi-corollary} gives \cite[Theorem~2.23]{AlluHalder-2021cc}.
	\end{remark}
	
	\begin{definition}
		The class of convex function with respect to symmetric points is given by
		\begin{equation*}
		\mathcal{C}_{s}(\psi) = \left\{ f\in \mathcal{A} : \frac{2(zf'(z))'}{f'(z)+f'(-z)} \prec \psi(z)    \right\}.
		\end{equation*}
	\end{definition}
	
	Now, we omit the details of the proof as it works on the similar lines discussed in the above theorems.
	\begin{theorem}\label{CsPsi}
		Let $k_{\psi}(z)=z+\sum_{n=2}^{\infty}l_n z^n$ be given by \eqref{int-rep-C} and $f(z)=z+\sum_{n=2}^{\infty}a_n z^n \in \mathcal{C}_{s}(\psi)$. If $g\in S_{f}(\psi)$. Then 
		\begin{equation*}
		|g(z^m)| + \sum_{k=N}^{\infty}|b_k||z|^k \leq d(0, \partial{\Omega})
		\end{equation*}
		holds for $|z|=r_b \leq \min \{ \frac{1}{3}, r_0 \}$, where $m, N\in \mathbb{N}$, $\Omega=f(\mathbb{D})$ and $r_0$ is the unique positive root of the equation:
		\begin{equation*}
		\int_{0}^{r}\frac{1}{s}\int_{0}^{s}\psi(t)(k'_{\psi}(t^2))^{1/2}dtds + R^N(r)= \int_{0}^{1}\frac{1}{s} \int_{0}^{s} \psi(-t)(k'_{\psi}(-t^2))^{1/2}dtds,
		\end{equation*}
		where $K'(z)=(k'_{\psi}(z^2))^{1/2}$ and
		\begin{equation*}
		R^N(r)=\int_{0}^{r}\frac{1}{s}\int_{0}^{s}{M_{t}^{N}(K'). M_{t}^{N}(\psi) }dtds.
		\end{equation*}
		The result is sharp when $r_b=r_0$ and $l_n>0$.
	\end{theorem}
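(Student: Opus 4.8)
The plan is to mirror the proof of Theorem~\ref{BR-CcPsi}, with the reflection $z\mapsto -z$ playing the role of conjugation. Given $f\in\mathcal{C}_{s}(\psi)$, I would write the defining subordination as $2(zf'(z))'=(f'(z)+f'(-z))\,\psi(\omega(z))$ for some Schwarz function $\omega$, and introduce the odd function $G(z)=\frac{1}{2}(f(z)-f(-z))$, so that $G'(z)=\frac{1}{2}(f'(z)+f'(-z))$ and hence $(zf'(z))'=G'(z)\,\psi(\omega(z))$. Integrating this twice, as in the derivation of \eqref{BR-CcPsi-proof1}, gives the representation
$$f(z)=\int_{0}^{z}\frac{1}{s}\int_{0}^{s}G'(t)\,\psi(\omega(t))\,dt\,ds.$$

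The crucial step is to control $G'$. Replacing $z$ by $-z$ in $(zf'(z))'=G'(z)\psi(\omega(z))$ and adding the two relations (using that $G'$ is even) gives
$$1+\frac{zG''(z)}{G'(z)}=\frac{(zG'(z))'}{G'(z)}=\frac{\psi(\omega(z))+\psi(\omega(-z))}{2},$$
so $G$ is odd and $1+zG''/G'$ is an even function subordinate to $\psi$, that is, subordinate to $\psi(z^{2})$ after the substitution $z\mapsto z^{2}$. Pushing this through the square root via the usual Ma--Minda subordination principle (subordination being preserved under $P\mapsto\exp\int_{0}^{z}\frac{P(t)-1}{t}\,dt$) forces $G'(z)\prec K'(z)$, where $K'(z)=(k'_{\psi}(z^{2}))^{1/2}$ is the derivative of the odd function determined by $1+zK''(z)/K'(z)=\psi(z^{2})$ and $k_{\psi}$ is given by \eqref{int-rep-C}. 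Feeding $G'\prec K'$ and $\psi\circ\omega\prec\psi$ into Lemma~\ref{series-lem}, exactly as in the proofs of Theorems~\ref{BR-Convexcase} and \ref{BR-CcPsi}, then yields, for $r\le\frac{1}{3}$ and $g\prec f$,
$$M_{r}^{N}(g)\le M_{r}^{N}(f)\le\int_{0}^{r}\frac{1}{s}\int_{0}^{s}M_{t}^{N}(G')\,M_{t}^{N}(\psi\circ\omega)\,dt\,ds\le\int_{0}^{r}\frac{1}{s}\int_{0}^{s}M_{t}^{N}(K')\,M_{t}^{N}(\psi)\,dt\,ds=R^{N}(r).$$

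For the modulus term I would invoke the growth theorem for $\mathcal{C}_{s}(\psi)$, which follows by integrating that for $\mathcal{S}_{s}^{*}(\psi)$ (cf.\ \cite{RaviConjugate-2004}), since $zf'(z)\in\mathcal{S}_{s}^{*}(\psi)$; together with the maximum modulus principle this gives, for $g\prec f$,
$$|g(z^{m})|\le\max_{|z|=r^{m}}|f(z)|\le\int_{0}^{r^{m}}\frac{1}{s}\int_{0}^{s}\psi(t)\,(k'_{\psi}(t^{2}))^{1/2}\,dt\,ds=:f_{0}(r^{m}),$$
where $f_{0}$ is the extremal function with $(zf_{0}'(z))'=K'(z)\psi(z)$, while the Koebe-type estimate furnishes $d(0,\partial\Omega)\ge -f_{0}(-1)$. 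Adding the two bounds, the assertion of the theorem holds for $|z|=r\le\min\{1/3,r_{0}\}$, where $r_{0}$ is the smallest positive root of $f_{0}(r^{m})+R^{N}(r)=-f_{0}(-1)$; the existence of $r_{0}\in(0,1)$ follows from the intermediate value theorem applied to the continuous function obtained as the difference of the two sides. Sharpness at $r_{b}=r_{0}$ follows by the standard extremal argument, taking $g=f=f_{0}$, under the hypothesis that the Taylor coefficients $l_{n}$ of $k_{\psi}$ are positive.

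The step I expect to be the main obstacle is the majorization $G'\prec K'$. In the conjugate-points case the symmetrized function lay directly in $\mathcal{C}(\psi)$, so Lemma~\ref{C-grth} applied verbatim; here one must first exploit the oddness of $G$ to turn the even relation $1+zG''/G'=\frac{1}{2}(\psi(\omega(z))+\psi(\omega(-z)))$ into a subordination to $\psi(z^{2})$, and then carry this through the square root to reach $(k'_{\psi}(z^{2}))^{1/2}$; this is where the geometry of $\psi(\mathbb{D})$ and its symmetry about the real axis must be used with some care. Once this majorization and $\psi\circ\omega\prec\psi$ are in hand, the rest — applying $M_{r}^{N}$ to the double-integral representation via its subadditivity and submultiplicativity, and the root count for $r_{0}$ — is routine and runs exactly as in Theorems~\ref{BR-Convexcase} and \ref{BR-CcPsi}.
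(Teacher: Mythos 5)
Your overall architecture (the decomposition $G(z)=\tfrac12(f(z)-f(-z))$, the identity $(zf'(z))'=G'(z)\psi(\omega(z))$, the double-integral representation, the reduction of $1+zG''/G'$ to an even function $Q(z^2)$ with $Q\prec\psi$, and the estimate $M_r^N(g)\le R^N(r)$ via the submultiplicativity of $M_r^N$ and Lemma~\ref{series-lem}) is exactly the route the paper intends, since the paper omits the proof as parallel to Theorems~\ref{BR-Convexcase} and \ref{BR-CcPsi}. Two remarks on that part: the assertion $G'\prec K'$ is delicate because $K'(z)=(k'_{\psi}(z^2))^{1/2}$ is even, hence not univalent, and image containment does not give subordination; the clean fix is to write $G'(z)=P(z^2)$ with $P\prec (k'_{\psi})^{1/2}$ (which does follow from $Q\prec\psi$ by the Suffridge/Ma--Minda integration argument, exponentiation and a fixed branch of the square root) and apply Lemma~\ref{series-lem} in the variable $w=z^2$; this yields $M_r^N(G')\le M_r^N(K')$ for $r\le 1/3$, which is all that is needed.

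The genuine gap is your distance estimate. You claim $d(0,\partial\Omega)\ge -f_0(-1)$ with $f_0(z)=\int_0^z\frac1s\int_0^s\psi(t)(k'_{\psi}(t^2))^{1/2}\,dt\,ds$, i.e. $d(0,\partial\Omega)\ge\int_0^1\frac1s\int_0^s\psi(-t)\,(k'_{\psi}(t^2))^{1/2}\,dt\,ds$, since $(-t)^2=t^2$. This is \emph{not} what integrating the growth theorem for $\mathcal{S}^*_s(\psi)$ (via $zf'\in\mathcal{S}^*_s(\psi)$) gives: the lower bound comes from the minimum modulus of the odd majorant $K'$ on $|z|=t$, which by the Ma--Minda distortion theorem applied to $k'_{\psi}$ on $|w|=t^2$ is $(k'_{\psi}(-t^2))^{1/2}$, attained at $z=\pm it$, and not $K'(-t)=(k'_{\psi}(t^2))^{1/2}$, which is the \emph{maximum}. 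Thus the available Koebe-type bound is $d(0,\partial\Omega)\ge\int_0^1\frac1s\int_0^s\psi(-t)(k'_{\psi}(-t^2))^{1/2}\,dt\,ds$ — precisely the right-hand side of the root equation in the statement — and this is strictly smaller than your $-f_0(-1)$ (already in the classical case $\psi(z)=(1+z)/(1-z)$ one has $\tfrac{1}{1+t^2}$ versus $\tfrac{1}{1-t^2}$). Consequently your root equation $f_0(r^m)+R^N(r)=-f_0(-1)$ is not the equation of the theorem, the radius it defines is larger than the one you can justify, and the inequality up to your $r_0$ is unproved. Relatedly, your sharpness argument takes $g=f=f_0$, but the function defined by $(zf_0'(z))'=K'(z)\psi(z)$ need not belong to $\mathcal{C}_s(\psi)$ unless $\psi(z^2)=\tfrac12(\psi(z)+\psi(-z))$, which holds for the half-plane case but not for general $\psi$; so the extremality claim also needs to be restated. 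Replacing your distance step by the minimum-modulus bound above (and correspondingly your root equation by the one in the statement) repairs the argument and recovers the theorem as stated.
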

	
	\begin{corollary}\label{CsPsi-corollary}
		If $f(z)=z+\sum_{n=2}^{\infty}a_n z^n \in \mathcal{C}_{s}(\psi)$. Then 
		\begin{equation*}
		|f(z^m)| + \sum_{k=N}^{\infty}|a_n||z|^n \leq d(0, \partial{\Omega})
		\end{equation*}
		holds for $|z|=r_b \leq \min \{ \frac{1}{3}, r_0 \}$, where $m, N\in \mathbb{N}$, $\Omega=f(\mathbb{D})$ and $r_0$ is as given in Theorem~\ref{CsPsi}.
		The result is sharp when $r_b=r_0$ and $l_n>0$.
	\end{corollary}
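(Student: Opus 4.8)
The plan is to obtain Corollary~\ref{CsPsi-corollary} as the diagonal case $g=f$ of Theorem~\ref{CsPsi}, so essentially nothing new has to be estimated. First I would note that every $f\in\mathcal{C}_{s}(\psi)$ is subordinate to itself: with the Schwarz function $\omega(z)=z$ one has $f\prec f$, hence $f\in S_{f}(\psi)$ and its subordinant Taylor coefficients are $b_n=a_n$. Applying Theorem~\ref{CsPsi} to the pair $(f,f)$ then turns the conclusion $|g(z^m)|+\sum_{k=N}^{\infty}|b_k||z|^k\leq d(0,\partial\Omega)$ into exactly
\[
|f(z^m)|+\sum_{n=N}^{\infty}|a_n||z|^n\leq d(0,\partial\Omega),\qquad |z|=r_b\leq\min\{1/3,r_0\},
\]
with the same $r_0$, namely the unique positive root of the equation displayed in Theorem~\ref{CsPsi}. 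So the content is purely a specialization $b_n\mapsto a_n$.

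For the sharpness assertion I would track the extremal used in the proof of Theorem~\ref{CsPsi} (which runs along the lines of Theorems~\ref{BR-Convexcase} and \ref{BR-CcPsi}): the chain of inequalities there compares $M_t^N(G')$ with $M_t^N(K')$, where $K'(z)=(k'_{\psi}(z^2))^{1/2}$ and $G'(z)=(f'(z)+f'(-z))/2$, and $\psi\circ\omega$ with $\psi$, via Lemma~\ref{series-lem}; it also invokes the growth theorem for $\mathcal{C}_{s}(\psi)$. Each of these bounds becomes an equality precisely when $f$ coincides with the generating function $K$ itself and $\omega=\mathrm{id}$, in which case $d(0,\partial\Omega)$ equals the right-hand double integral of Theorem~\ref{CsPsi}. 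Taking $f=g=K$ therefore collapses the whole estimate to an equality at $|z|=r_b=r_0$, provided $l_n>0$ so that $\widehat{K}=K$ and the moduli of the coefficients of $K$ are the coefficients themselves; this shows $r_0$ cannot be enlarged.

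The only genuine point to check — the mild obstacle — is that the sharpness function $K$ determined by $K'(z)=(k'_{\psi}(z^2))^{1/2}$ (with $K(0)=0$) indeed lies in $\mathcal{C}_{s}(\psi)$, i.e. $2(zK'(z))'/(K'(z)+K'(-z))\prec\psi$, and that the hypothesis $l_n>0$ on the coefficients of $k_{\psi}$ in \eqref{int-rep-C} propagates to nonnegativity of the coefficients of $K$; granting this, the substitution argument above completes the proof, and every other step is the routine reindexing already carried out in Theorem~\ref{CsPsi}.
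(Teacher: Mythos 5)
Your proposal is correct and matches the paper's (implicit) argument exactly: the corollary is simply Theorem~\ref{CsPsi} specialized to the pair $g=f$, using $f\prec f$ so that $f\in S_{f}(\psi)$ and $b_n=a_n$, with the same root $r_0$ and the same sharpness discussion. The paper offers no separate proof beyond this specialization, so your extra remark on verifying the extremal for sharpness is a reasonable but inessential addition.
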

	\begin{remark}
		Taking $m\rightarrow\infty$ and $N=1$ in Corollary~\ref{CsPsi-corollary} gives \cite[Theorem~2.25]{AlluHalder-2021cc}.
	\end{remark}	
	
	\section*{Conflict of interest}
	The authors declare that they have no conflict of interest.

\end{document}